\DeclareFontFamily{U}{wncy}{}
\DeclareFontShape{U}{wncy}{m}{n}{<->wncyr10}{}
\DeclareSymbolFont{mcy}{U}{wncy}{m}{n}
\DeclareMathSymbol{\Sh}{\mathord}{mcy}{"58}
\newcommand{\F}{\mathbb F}
\newcommand{\FF}{\mathbb{F}}
\newtheorem{theorem}{Theorem}[section]
\newtheorem{lemma}[theorem]{Lemma}
\newtheorem{corollary}[theorem]{Corollary}
\newtheorem{proposition}[theorem]{Proposition}
\newtheorem{definition}[theorem]{Definition}
\theoremstyle{remark}
\theoremstyle{remark}\newtheorem{remark}[theorem]{Remark}
\begin{document}

\title{Finding nontrivial zeros of quadratic forms over rational function fields of characteristic 2}

\author{Tímea Csahók, Péter Kutas, Mickaël Montessinos, Gergely Zábrádi\footnote{Supported by MTA R\'enyi Institute Lend\"ulet Automorphic Research Group, by the Thematic Excellence Programme, Industry and Digitization Subprogramme, NRDI Office, 2020, and by the NKFIH Research grants FK-127906 and K-135885.}}

\maketitle

\begin{abstract}
 We propose polynomial-time algorithms for finding nontrivial zeros of quadratic forms with four variables over rational function fields of characteristic 2. We apply these results to find prescribed quadratic subfields of quaternion division division algebras and zero divisors in $M_2(D)$, the full matrix algebra over a division algebra, given by structure constants. We also provide an implementation of our results in MAGMA which shows that the algorithms are truly practical. 
\end{abstract}

\section{Introduction}

The theory of quadratic spaces has a long history in mathematics and has applications in topology, number theory, algebraic geometry and in many other areas of mathematics. Two quadratic forms are equivalent if there is an invertible linear change of variables transforming one form into the other one (or alternatively, there exists an vector space isomorphism between their corresponding quadratic spaces that also respects the quadratic structure). The theory of quadratic forms is vastly different in characteristic 2 and in any other characteristic. Nevertheless, the concept of equivalence is key in both cases. A quadratic form is called isotropic if it admits a nontrivial zero and is called anisotropic otherwise. Quadratic spaces have a well-known decomposition into the direct sum of special quadratic subspaces containing isotropic vectors (called hyperbolic planes) and an anisotropic part. This somehow motivates the fact that computing isotropic vectors is useful in determining equivalence of quadratic forms. 

Let $K=\mathbb{F}_q(t)$ be the rational function field in one variable, where $q$ is an odd prime power. In \cite{ivanyos2019explicit} the authors describe a polynomial-time algorithm that decides whether two quadratic forms over $K$ are equivalent, and if so, finds an explicit equivalence between them. The key tool is a subroutine that finds isotropic vectors of the form. The algorithm doesn't naturally generalize to field extensions and doesn't work when $q$ is a power of 2. Computing isotropic vectors of quadratic forms in odd characteristic function fields (i.e., finite extensions of $\mathbb{F}_q(t)$) is considered in \cite{koprowski2021isotropic}. The algorithm works for any extensions but is not claimed to run in polynomial time. Furthermore, as demonstrated in \cite{ivanyos2019explicit}, quadratic form algorithms can be used to find zero divisors in quaternion algebras over quadratic field extensions. 

The only known quadratic form algorithm in the characteristic 2 case comes from the well-known correspondence between quaternion algebras and ternary quadratic forms. Since in \cite{ivanyos2018computing} the main algorithm can find zero divisors in quaternion algebras it can be used to find nontrivial zeros of ternary quadratic forms. In this work we consider the algorithmic problem of finding nontrivial zeros of quadratic forms over $\mathbb{F}_{2^k}(t)$ in 4 variables. Our contributions are the following:
\begin{itemize}
    \item We propose a polynomial-time algorithm that decides whether a four-variable form is isotropic or not. If it is, it also outputs a nontrivial zero.
    \item We provide a Magma implementation for finding zeros of ternary quadratic forms. Even though the algorithm is not novel it hasn't been implemented before.
    \item An implementation of our main algorithm in Magma \cite{magma}.
\end{itemize}
The paper is structured as follows. In Section \ref{sec:prelim} we recall theoretical and algorithmic preliminaries. In Section \ref{sec:main} we describe our algorithm for finding nontrivial zeros. We also provide some applications of this result, such constructing quaternion algebras with prescribed Hasse invariants, finding zero divisors in $M_2(D)$ where $D$ is a quaternion algebra over $\mathbb{F}_{2^k}(t)$ and finding prescribed maximal subfields in quaternion algebras. In Section \ref{sec:impl} we provide details about our Magma implementation of our main algorithm. For the implementation, see \url{https://github.com/Char2QuadForms/Char2QuadForms}. In Appendix \ref{appendix:algo} we give pseudo-code algorithms for the main subroutines necessary for algorithm \ref{algofinding}.

\section{Preliminaries}\label{sec:prelim} 

\subsection{Number theory background}

We end this section by stating some classical results independent of the characteristic (even though we only use them in characteristic $2$).

In this section we collect the background we need from Number theory. The following discussion is independent of the characteristic.

We are going to use the following higher dimensional variant of Hensel's lemma. Let $O$ be a complete discrete valuation ring with maximal ideal $P$. Given a multivariate polynomial $f(x_1,\dots,x_n)\in O[x_1,\dots,x_n]$ such that the gradient $(\frac{\partial f}{\partial x_1},\dots,\frac{\partial f}{\partial x_n})$ is nonzero modulo $P$ at a modulo $P$ solution $(\overline{u_1},\dots,\overline{u_n})$ then this lifts to a solution in $O$. However, the lift is not unique in general: using the one-variable Hensel's lemma one can even choose an arbitrary lift of $\overline{u_1},\dots,\overline{u_{j-1}},\overline{u_{j+1}}, \dots,\overline{u_n}$ for any $1\leq j\leq n$ with $\frac{\partial f}{\partial x_j}(u_1,\dots,u_n)\not\equiv 0\pmod{P}$.

We state a variant of the Hasse--Minkowski theorem over the field $\FF(t)$ of rational functions over a finite field $\FF$ \cite[Chapter VI, 3.1]{lam2005introduction}. It was proved by Hasse's doctoral student
Herbert Rauter in 1926 \cite{rauter1926uber}.
\begin{theorem}\label{HM}
A non-degenerate quadratic form over $\FF(t)$ is isotropic over $\FF(t)$ if and only if it is isotropic over every completion of $\FF(t)$.
\end{theorem}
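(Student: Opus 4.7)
The proof proceeds by induction on the number of variables $n$ of the quadratic form $q$; the forward implication is immediate, since an $\F(t)$-rational isotropic vector remains isotropic after scalar extension to any completion. The base cases $n=1,2$ are elementary: the unary case is vacuous, and for $n=2$, a non-degenerate binary form $\langle a,b\rangle$ is isotropic iff $-ab$ is a square in $\F(t)$, and the fact that an element of $\F(t)$ which is a square in every completion must be a square follows from unique factorization in $\F[t]$ together with the parities of the local valuations and the behaviour at the place at infinity.

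The crucial case is $n=3$. Here the form corresponds (up to a scalar) to the reduced norm form of a uniquely determined quaternion algebra $A$ over $\F(t)$, and isotropy of the form is equivalent to $A$ being split. The local-global principle thus reduces to the Albert--Brauer--Hasse--Noether theorem in the function field setting, i.e. the injectivity of the diagonal map
\[
\mathrm{Br}(\F(t))\hookrightarrow \bigoplus_v \mathrm{Br}(\F(t)_v).
\]
I expect this to be the main obstacle: it is the part of the proof that genuinely uses global class field theory, equivalently the reciprocity law for Hilbert symbols on $\F(t)$, and nothing weaker will suffice.

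For $n=4$, the plan is a Witt-theoretic reduction. After a suitable decomposition $q\simeq q_1\perp c\cdot q_2$ into two non-degenerate binary forms (in characteristic $\neq 2$ one may take $q_i=\langle 1,a_i\rangle$; in characteristic $2$ the analogous decomposition into binary norm forms of quadratic étale algebras is used), isotropy of $q$ is equivalent to the existence of a common value represented by $q_1$ and $-c q_2$. Using the local hypothesis together with weak approximation on the square classes (or the analogous group of classes of binary forms in characteristic $2$), one produces a scalar $d\in \F(t)^\times$ such that both ternary forms $q_1\perp \langle -d\rangle$ and $c q_2\perp \langle -d\rangle$ become isotropic over every completion, and hence globally by the $n=3$ case already proved; piecing the two isotropic vectors together yields a nontrivial zero of $q$.

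Finally, for $n\geq 5$ the equivalence is unconditional: by Tsen's theorem $\F(t)$ is a $C_2$ field, so every non-degenerate quadratic form in more than four variables over $\F(t)$ already has a nontrivial zero, and the same holds over every non-archimedean completion. Both sides of the biconditional are therefore automatically true, which completes the induction.
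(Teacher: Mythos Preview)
The paper does not actually prove Theorem~\ref{HM}; it is quoted as a classical result, with references to Lam's book \cite[Chapter~VI,~3.1]{lam2005introduction} and to Rauter's original 1926 paper. So there is no ``paper's own proof'' to compare against---your sketch is being measured against the standard literature proof, which is indeed the inductive scheme you outline (and which Lam carries out in detail).

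Your outline is broadly the standard one, but a few points deserve tightening. First, your $n=2$ argument is phrased for diagonal forms $\langle a,b\rangle$ and squares, which is only correct in odd characteristic; over a field of characteristic~$2$ a non-degenerate binary form is isometric to $[a,b]=ax^2+xy+by^2$, and isotropy is governed by whether $ab$ lies in the Artin--Schreier image $\wp(\F(t))=\{x^2+x\}$, not by square classes. The local--global input here is therefore the exact sequence for $H^1$ with $\mathbb{Z}/2$ coefficients in the Artin--Schreier guise, not the square-class argument you wrote. Second, in your $n=4$ step the phrase ``weak approximation on the square classes'' hides all the work: one really needs to produce a $d$ lying in the prescribed coset of $N_{K_i/\F(t)}(K_i^\times)$ at finitely many places simultaneously, and this uses the full strength of the reciprocity law (not merely weak approximation, which by itself does not control norm classes). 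Third, the $n\geq 5$ case is not Tsen's theorem per se---Tsen's theorem concerns function fields over algebraically closed fields---but rather Chevalley--Warning (giving $\F$ is $C_1$) combined with the Tsen--Lang transitivity theorem; the conclusion that $\F(t)$ is $C_2$ is correct, only the attribution is loose.

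None of these are fatal to the strategy, but as written your $n=2$ and $n=4$ paragraphs would not go through in characteristic~$2$ without the modifications above.
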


For ternary quadratic forms there exists a slightly stronger version of this theorem which is a consequence of the product formula for quaternion algebras or Hilbert's reciprocity law \cite[Chapter IX, Theorem 4.6]{lam2005introduction}:
\begin{theorem}\label{Product formula}
Let $Q$ be a ternary non-degenerate quadratic form over $\FF(t)$. Then if it is isotropic in every completion except maybe one then it is isotropic over $\FF(t)$.
\end{theorem}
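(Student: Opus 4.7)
The plan is to convert the statement into one about a quaternion algebra and then invoke the product formula (Hilbert reciprocity) directly. Given a non-degenerate ternary quadratic form $Q$ over $\FF(t)$, I would associate to it a quaternion algebra $A$ over $\FF(t)$ via the classical bijection between similarity classes of non-degenerate ternary forms and quaternion algebras (realising $Q$, up to a scalar, as the ``pure'' trace-zero norm form of $A$). The key property of this correspondence is local--global: for every place $v$ of $\FF(t)$, the form $Q$ is isotropic over the completion $\FF(t)_v$ if and only if $A\otimes_{\FF(t)}\FF(t)_v$ is split, i.e.\ isomorphic to $M_2(\FF(t)_v)$.

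Next I would use the product formula for the local Hasse invariants $\varepsilon_v(A)\in\{\pm 1\}$ of the quaternion algebra $A$, namely
\[
\prod_v \varepsilon_v(A)\;=\;1,
\]
where the product runs over all places of $\FF(t)$ and $\varepsilon_v(A)=1$ precisely when $A$ splits at $v$. Equivalently, the (finite) set $S$ of places at which $A$ is ramified has even cardinality. This is exactly the Hilbert reciprocity law stated in \cite[Chapter IX, Theorem 4.6]{lam2005introduction}.

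Now suppose, as in the statement, that $Q$ is isotropic over $\FF(t)_v$ for every place $v$ except possibly one place $v_0$. Under the correspondence, this says $A$ is split at every $v\neq v_0$, so $S\subseteq\{v_0\}$. Since $|S|$ must be even, we conclude $S=\emptyset$, and in particular $A$ is split at $v_0$ as well. Therefore $Q$ is isotropic at every completion of $\FF(t)$, and applying Theorem \ref{HM} (the Hasse--Minkowski theorem for $\FF(t)$) gives that $Q$ is isotropic over $\FF(t)$.

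The main thing that requires care, rather than any real obstacle, is the characteristic $2$ aspect: one has to be sure that the ternary form/quaternion algebra dictionary and the product formula both remain valid without restrictions on the characteristic. In characteristic $2$, quadratic and symmetric bilinear forms diverge, so the correspondence must be phrased in terms of quadratic (not bilinear) forms; however, the existence of the quaternion algebra $A$ associated to a non-degenerate ternary quadratic form, the local splitting criterion, and Hilbert reciprocity are all established in this generality in the reference cited, which is what makes the argument go through verbatim.
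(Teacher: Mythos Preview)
Your proposal is correct and is precisely the argument the paper has in mind: the paper does not give a proof of this theorem but simply records it as a consequence of Hilbert's reciprocity law / the product formula for quaternion algebras, citing \cite[Chapter IX, Theorem 4.6]{lam2005introduction}. Your sketch (pass to the associated quaternion algebra, use the parity constraint on the ramification set, then invoke Theorem \ref{HM}) is exactly that deduction spelled out.
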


Finally, we need the following version (extract) of the local reciprocity law for function fields.
\begin{theorem}[Thm.\ I.1.1, Cor.\ I.1.2, Prop.\ III.1.2 in \cite{milne1997class}]\label{LCFTneeded}
Let $K$ be a nonarchimedean local field. Then the map $L\mapsto N_{L/K}(L^\times)$ is a bijection from the set of finite abelian extensions $L$ of $K$ to the norm subgroups in $K^\times$. Further, for any abelian extension $L|K$ we have $\operatorname{Gal}(L/K)\cong K^\times/N_{L/K}(L^\times)$. If $L/K$ is unramified then we have $N_{L/K}(\mathcal{O}_L^\times)=\mathcal{O}_K^\times$. Here $\mathcal{O}_K$ (resp.\ $\mathcal{O}_L$) denotes the valuation ring in $K$ (resp.\ in $L$).
\end{theorem}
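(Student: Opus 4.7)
The statement collects three foundational results of local class field theory, so the natural plan is to derive all three from the existence and properties of the local Artin reciprocity map $\theta_K\colon K^\times \to \operatorname{Gal}(K^{ab}/K)$. I would first construct $\theta_K$; the two standard routes are the explicit construction via Lubin--Tate formal groups (which has the advantage of producing the abelian extensions by hand) and the cohomological route (computing $H^2(\operatorname{Gal}(L/K),L^\times)$ for finite Galois $L/K$ and using Tate's theorem on cyclic cohomology). Either way, the output is a continuous map $\theta_K$ whose composition with restriction gives, for each finite abelian $L/K$, a surjection $K^\times \twoheadrightarrow \operatorname{Gal}(L/K)$ with kernel precisely $N_{L/K}(L^\times)$.

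Given this reciprocity isomorphism, item (2) of the statement is immediate: $K^\times/N_{L/K}(L^\times) \cong \operatorname{Gal}(L/K)$. For item (1), injectivity of $L\mapsto N_{L/K}(L^\times)$ follows because $L$ is recovered as the fixed field in $K^{ab}$ of the image $\theta_K(N_{L/K}(L^\times))$, while surjectivity is precisely the existence theorem, stating that every open subgroup of finite index in $K^\times$ arises as a norm group. This latter assertion is the deepest ingredient; in the Lubin--Tate approach it is obtained by showing that the union of $K_\pi^{ab}$ (with $\pi$ varying) together with the maximal unramified extension exhausts $K^{ab}$, and in the cohomological approach it comes from local duality combined with the fact that open finite-index subgroups of $K^\times$ separate points.

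For item (3), the unramified statement, I would argue directly using the structure of $K^\times \cong \pi^\Z \times \mathcal{O}_K^\times$. Under $\theta_K$, an unramified extension $L/K$ of degree $n$ corresponds to the subgroup $\pi^{n\Z}\cdot \mathcal{O}_K^\times$, so by item (2) already $\mathcal{O}_K^\times \subseteq N_{L/K}(L^\times)$. Since the norm preserves valuation, one actually has $\mathcal{O}_K^\times \subseteq N_{L/K}(\mathcal{O}_L^\times)$, and the opposite inclusion is trivial. The surjectivity of the norm on unit groups in the unramified case can also be verified directly without invoking the full reciprocity map: on residue fields it reduces to the surjectivity of the norm $\mathbb{F}_{q^n}^\times \to \mathbb{F}_q^\times$, and units are lifted from residues by Hensel's lemma applied successively to the layers $1+\mathfrak{p}_L^i / 1+\mathfrak{p}_L^{i+1}$.

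The main obstacle is thus the existence theorem underlying item (1); everything else is a fairly mechanical consequence of having $\theta_K$ in hand, and the unramified statement in item (3) can be proved independently by elementary means. Because a fully self-contained proof requires either the Lubin--Tate machinery or a substantial amount of Galois cohomology, the most reasonable course in a paper of this scope is exactly what the authors do: cite Milne's notes and extract the precise formulation needed.
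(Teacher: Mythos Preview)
The paper provides no proof of this theorem at all: it is stated purely as a citation of results from Milne's \emph{Class Field Theory}, with no accompanying argument. Your proposal correctly anticipates this in its final paragraph, and the proof sketch you give (build the local reciprocity map via Lubin--Tate or cohomology, read off the norm-index isomorphism and the existence theorem, and handle the unramified norm statement either as a corollary or by the direct residue-field/Hensel argument) is an accurate high-level outline of the standard development. There is nothing to compare beyond noting that you have supplied more than the paper does; your observation that a self-contained proof is out of scope here matches the authors' choice exactly.
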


\subsection{Algorithmic preliminaries}

\paragraph{Quadratic form algorithms}
Quadratic forms over fields of characteristic different from 2 have a long algorithmic history. When $char(K)\neq 2$, then the theory of ternary quadratic forms has a close connection to quaternion algebras over $\mathbb{Q}$. Namely finding a nontrivial zero of the quadratic form $ax^2+by^2-cz^2$ (where $c\neq 0$) is equivalent to finding a zero divisor in the quaternion algebra $(\frac{a}{c},\frac{b}{c})$. This algorithmic correspondance is exploited in \cite{ivanyos1996lattice} to provide an algorithm for finding zeros of indefinite rational ternary quadratic forms. In \cite{cremona2003efficient} a more direct approach is followed which is also used by \cite{van2006solving} in the case where $K$ is a rational function field. Every approach uses lattice reduction in some fashion. None of these approaches generalize to extension fields (even quadratic extensions).

In \cite{simon2005quadratic} Simon proposes an algorithm which finds nontrivial zeros of quadratic forms in four or more variables. The main idea of the algorithm is the following. Let $Q_1$ be a quadratic form in 4 variables. Then one first finds a quadratic form $Q_2$ of dimension 2 such that the orthogonal sum of the corresponding quadratic spaces results ina hyperbolic space (direct sum of hyperbolic planes). Then one can use the algorithm from \cite{simon2005solving} to compute a maximal isotropic subspace of the new quadratic space (which will have dimension 3 in this case). This will have a nontrivial intersection with the original 4-dimensional quadratic space and the intersection can be computed efficiently. Any nonzero element in the intersection corresponds to nontrivial zero. The main algorithmic tool in finding the suitable form $Q_2$ is the computation of the 2-Sylow part of a certain class group of an imaginary quadratic field. The algorithm requires an oracle for factoring the discriminant of the form which was known to be necessary for forms with 4 variables. Interestingly, under GRH, Castel \cite{castel2013solving} showed that when the number of variables is at least 5, then one can adapt Simon's algorithm in a way that a factoring oracle is no longer necessary. 
The case of finding nontrivial zeros over rational function fields with arbitrary many variables was considered in \cite{ivanyos2019explicit}. The main idea of this algorithm is quite simple: split the 4-variable form into two binary forms and find a common value they both represent. The main theoretical tool here is the local global principle and the following efficient formula for the number of monic irreducible polynomials in a given residue class of a given degree \cite{wan1997generators}:
\begin{lemma}\label{number}
Let $a,m\in\mathbb{F}_q[t]$ be such that $deg(m)>0$ and the $gcd(a,m)=1$. Let $N$ be a positive integer and let  
$$S_N(a,m)=\#\{f\in\mathbb{F}_q[t]~\text{monic irred.}~|~f\equiv a ~(mod ~m),~ deg(f)=N\}.$$
Let $M=deg(m)$ and let $\Phi(m)$ denote the number of polynomials in $\mathbb{F}_q[t]$ relative prime to $m$ whose degree is smaller than M. Then we have the following inequality:
\begin{equation*}
|S_N(a,m)-\frac{q^N}{\Phi(m)N}|\leq \frac{1}{N}(M+1)q^{\frac{N}{2}}.
\end{equation*}
\end{lemma}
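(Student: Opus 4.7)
The plan is to use the analytic theory of Dirichlet L-functions over the function field $\mathbb{F}_q(t)$, following the pattern of the prime number theorem in arithmetic progressions. The key new ingredient compared to the number field setting is that the Riemann hypothesis for these L-functions is a theorem (due to Weil), rather than a conjecture.

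For each Dirichlet character $\chi$ of $(\mathbb{F}_q[t]/m)^\times$, extended by zero to polynomials not coprime to $m$, I would introduce the L-function
$$L(u,\chi) = \sum_{f\text{ monic}} \chi(f)\, u^{\deg f} = \prod_{p\text{ monic irred.}} \bigl(1 - \chi(p)\, u^{\deg p}\bigr)^{-1}.$$
For $\chi \neq \chi_0$ this is a polynomial in $u$ of degree at most $M - 1$, and Weil's theorem ensures that its reciprocal roots $\alpha_i$ all satisfy $|\alpha_i| \leq \sqrt{q}$. Taking a logarithmic derivative and comparing coefficients gives
$$\psi(N,\chi) := \sum_{\substack{f\text{ monic}\\ \deg f = N}} \Lambda(f)\,\chi(f) = -\sum_i \alpha_i^N,$$
so $|\psi(N,\chi)| \leq (M-1)\, q^{N/2}$ for every non-principal $\chi$. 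For the principal character $\chi_0$ a direct computation yields $\psi(N,\chi_0) = q^N - \sum_{p\mid m,\,\deg p\mid N} \deg p$, which differs from $q^N$ by at most $M$. Orthogonality of characters on $(\mathbb{F}_q[t]/m)^\times$ then gives
$$\sum_{\substack{f\text{ monic},\,\deg f = N\\ f \equiv a \pmod m}} \Lambda(f) = \frac{1}{\Phi(m)} \sum_\chi \overline{\chi(a)}\, \psi(N,\chi),$$
and on the left-hand side the contribution of monic irreducibles is exactly $N\, S_N(a,m)$, while the residual contribution from proper prime powers of degree $N$ is bounded by $\sum_{k \geq 2,\,k\mid N} q^{N/k} \leq 2\, q^{N/2}$.

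Assembling the three error contributions — proper prime powers, the secondary term in $\psi(N,\chi_0)$, and the sum over non-principal characters — and dividing through by $N$ produces a bound of the shape stated in the lemma. The main obstacle is a careful bookkeeping so that the total error fits within the clean quantity $(M+1)\, q^{N/2}/N$ rather than yielding a slightly larger numerical constant; everything else is mechanical once the Weil bound for $L(u,\chi)$ is granted.
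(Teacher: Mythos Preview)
The paper does not give its own proof of this lemma: it is quoted as a known result with a reference to Wan (\cite{wan1997generators}). So there is no in-paper argument to compare against.

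That said, your outline is the standard and correct way to establish this bound, and it is essentially the argument in Wan's paper: express the weighted prime-counting function via orthogonality of Dirichlet characters modulo $m$, use that for nonprincipal $\chi$ the $L$-function $L(u,\chi)$ is a polynomial of degree $\leq M-1$ whose inverse roots satisfy $|\alpha_i|\leq q^{1/2}$ by the Weil bound, handle the principal character separately, and then strip off the proper prime-power contribution. Your own caveat is the right one: the only nontrivial point is the bookkeeping needed to land exactly on the constant $(M+1)$ rather than something like $(M+1+\varepsilon)$; each of the three error sources you list is individually routine, but you have not actually verified that their sum stays under $(M{+}1)q^{N/2}$ for all $N\geq 1$ and $M\geq 1$. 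If you want a self-contained proof rather than a citation, that final accounting is what remains to be written out.
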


The algorithm could be adapted to the rational setting but it will become heuristic as there is no such efficient formula in the rational case. 

\paragraph{Splitting quaternion algebras in characteristic 2}

In characteristic 2 (to the best of our knowledge) there is no direct algorithm for finding nontrivial zeros of ternary quadratic forms. However, there is a similar relation between split quaternion algebras and quadratic forms with nontrivial zeros. In \cite{ivanyos2018computing} the authors study the problem of finding primitive idempotents in full matrix algebras over $\mathbb{F}_q(t)$ given by a structure constant representation. In particular this encompasses the case of quaternion algebras over function fields of characteristic 2. The main idea of the algorithm is the following. One computes two maximal orders, one over $\mathbb{F}_q[t]$ and one over the ring of rational functions whose denominator has degree larger than the degree of the numerator (the maximal order corresponding to the degree valuation). This intersection can be computed using lattice reduction techniques. The intersection will be finite algebra over the base field $\mathbb{F}_q$ which contains a rank 1 element from the large algebra which can be retrieved by computing the structure of this algebra. This algorithm runs in polynomial-time but has not been implemented so far.


\section{Finding nontrivial zeros of quadratic forms over \texorpdfstring{$\mathbb{F}_{2^k}(t)$}{F_{2^k}(t)}}\label{sec:main}

In this section we concentrate on the case of characteristic $2$. In subsection \ref{qfqachar2} we recall some basic facts on quadratic forms and quaternion algebras in characteristic $2$. Then subsection \ref{localcomp} is devoted to developing local criteria for the existence of nontrivial zeros of quadratic forms using Hensel's lemma. In section \ref{zeroschar2} with the help of these criteria we propose an algorithm that decides whether or not a quadratic form is isotropic globally and if so the algorithm finds a nontrivial zero. We give an example in subsection \ref{example}. Finally, in section \ref{zerodivchar2} we apply the results in the previous section to finding a zero divisor in split quaternion algebras defined over quadratic extensions. Using the construction of quaternion algebras with given local splitting conditions this leads to finding, in polynomial time, zero divisors in degree two matrix rings over nonsplit quaternion algebras defined over the ground field.

\subsection{Quadratic forms and quaternion algebras in characteristic 2}\label{qfqachar2}

In this subsection we recall important facts about quadratic forms and quaternion algebras in characteristic 2. Our main source is \cite[Chapter 6]{voight2018quaternion}.
From here on $F$ will always denote a field with characteristic $2$.

\begin{lemma}\label{kvatalg}\cite[Chapter 6]{voight2018quaternion}
For every quaternion algebra $A$ over $F$ there exists an $F$-basis $1,i,j,k$ of $A$ such that
\[
i^2+i=a,\, j^2=b, \quad \text{and} \quad k=ij=j(i+1)
\]
where $a,b\in F$.
\end{lemma}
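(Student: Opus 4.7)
My strategy is to build the basis in two stages: first extract an Artin--Schreier generator $i$ of a separable quadratic étale $F$-subalgebra of $A$, then invoke Skolem--Noether to produce $j$ implementing the non-trivial involution of that subalgebra; the relation $j^2\in F$ and the $F$-linear independence of $1,i,j,ij$ will then follow by direct computation.

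For the first stage, the reduced trace $\operatorname{Tr}\colon A\to F$ is surjective (after base change to a splitting field it becomes the matrix trace on $M_2$, which is surjective, so $\operatorname{Tr}$ is non-zero, and its image is a nonzero $F$-subspace of the one-dimensional space $F$). I would pick any $x\in A$ with $t := \operatorname{Tr}(x)\ne 0$. In characteristic $2$ one has $\operatorname{Tr}(1)=2=0$, so $\operatorname{Tr}(F)=0$ and automatically $x\notin F$. The reduced characteristic polynomial of $x$ is $X^2+tX+n$ with $n := \operatorname{N}(x)$, so $i := x/t$ satisfies $i^2+i = n/t^2 =: a \in F$, and $i\notin F$.

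For the second stage, $K := F[i]$ is a $2$-dimensional separable étale $F$-subalgebra of $A$ admitting a non-trivial $F$-algebra involution $\sigma$ with $\sigma(i)=i+1$, well-defined because $(i+1)^2+(i+1) = i^2+i = a$. Applying Skolem--Noether (in its semisimple-subalgebra form) to the embeddings $K\hookrightarrow A$ and $K\xrightarrow{\sigma} K\hookrightarrow A$ yields $j\in A^\times$ with $jij^{-1} = i+1$, equivalently $ij = j(i+1)$. To see that $(1,i,j,ij)$ is an $F$-basis, note $jK = \sigma(K)j = Kj$, and for any $u \in K\cap Kj$ write $u = kj$ with $k\in K$: then $iu = ikj = kij = kj(i+1) = u(i+1) = ui+u$, while $iu = ui$ since $u\in K$ commutes with $i$, forcing $u=0$; hence $A = K\oplus Kj$. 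Finally $j^2 i = j(ji) = j(i+1)j = (j(i+1))j = (ij)j = ij^2$ shows $j^2$ commutes with both generators $i,j$ of $A$, so $j^2\in Z(A)=F$ and we set $b := j^2$.

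The one delicate step is Skolem--Noether in the \emph{split} case where $X^2+X+a$ factors over $F$ and $K \cong F\times F$ is not a field; this requires the version of the theorem valid for semisimple subalgebras---equivalently, the fact that the two primitive idempotents of $K$ give rise to isomorphic minimal one-sided ideals in $A\cong M_2(F)$ and are therefore conjugate in $A^\times$.
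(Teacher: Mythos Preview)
The paper does not supply its own proof of this lemma; it is stated with a bare citation to Voight's book. Your argument is correct and is essentially the standard one found there: extract a trace-one element to obtain an Artin--Schreier generator $i$ of a separable quadratic \'etale subalgebra $K=F[i]$, invoke Skolem--Noether (with the appropriate care in the split case $K\cong F\times F$, where one uses conjugacy of rank-$1$ idempotents in $M_2(F)$) to produce $j\in A^\times$ realizing the nontrivial involution of $K$, and then verify $j^2\in Z(A)=F$ and $A=K\oplus Kj$ by direct computation. There is nothing to compare against in the paper itself.
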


We denote the quaternion algebra over $F$ with parameters $a,b$ as $\left[\frac{a,b}{F}\right)$. We recall some facts about quadratic forms over fields of characteristic 2.


\begin{definition}
A \textbf{quadratic form} over $F$ is a homogeneous polynomial $Q$ of degree two in $n$ variables $x_1,\ldots,x_n$ for some $n$. We say that $Q$ is \textbf{isotropic} if there exist $a_1,\ldots,a_n\in F$ not all zero such that $Q(a_1,\ldots,a_n)=0$. If $Q$ is not isotropic, we say that $Q$ is \textbf{anisotropic}.
\end{definition}

We can also view a quadratic form $Q$ with $n$ variables over $F$ as a $Q: F^n\to F$ function. This motivates the following definition.
\begin{definition}
We say that two quadratic forms $Q_1$ and $Q_2$ are \textbf{isometric} if there exists a $\varphi: F^n\to F^n$ invertible linear map such that $Q_1\circ \varphi=Q_2$. 
\end{definition}

\begin{definition}
Let $Q_1$ and $Q_2$ be diagonal quadratic forms in $n$ variables. We call $Q_1$ and $Q_2$ \textbf{similar} if there exist a quadratic form $Q'$ that is isometric to $Q_2$ and such that $Q'$ can be obtained from $Q_1$ by multiplication of $Q_1$ by a non-zero $g\in F$.
\end{definition}

Even though if $\text{char}\, F=2$, not all quadratic forms can be diagonalized (we get $ax^2+axy+by^2$ as the general form), the following can be said about quadratic forms in four variables. 

\begin{lemma}\label{diagalak}\cite[Cor. 7.32]{elman2008algebraic} 
Every regular quadratic form in four variables over $F$ is equivalent to a quadratic form in the form of
\[
a_1x_1^2+x_1x_2+b_1x_2^2+a_3x_3^2+x_3x_4+b_2x_4^2
\]
where $a_1,a_3,b_1,b_2\in F$.
\end{lemma}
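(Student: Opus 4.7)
The plan is to use the associated alternating bilinear form in characteristic $2$ and split off binary blocks one at a time, following the standard Witt-type decomposition for quadratic spaces in characteristic $2$.

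First, I would attach to $Q$ its polar form
\[
B(u,v) = Q(u+v) + Q(u) + Q(v),
\]
which is bilinear and, because $\chara F = 2$, alternating (so $B(u,u)=0$). Regularity of $Q$ means, by definition, that $B$ is nondegenerate on the underlying $4$-dimensional space $V$. Since nondegenerate alternating forms exist only in even dimension (which is consistent with $\dim V = 4$), I can look for a symplectic pair inside $V$.

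Next I would build the first binary block. Pick any nonzero $e_1 \in V$; by nondegeneracy of $B$ there is $e_2' \in V$ with $B(e_1,e_2')\neq 0$, and after rescaling $e_2 = e_2'/B(e_1,e_2')$ I have $B(e_1,e_2)=1$. Setting $a_1 = Q(e_1)$ and $b_1 = Q(e_2)$, a direct expansion using $Q(\lambda_1 e_1 + \lambda_2 e_2) = \lambda_1^2 Q(e_1) + \lambda_1\lambda_2 B(e_1,e_2) + \lambda_2^2 Q(e_2)$ shows that the restriction of $Q$ to $U := \mathrm{span}(e_1,e_2)$ takes the shape
\[
a_1 x_1^2 + x_1 x_2 + b_1 x_2^2
\]
in the basis $(e_1,e_2)$.

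Now I would take orthogonal complements with respect to $B$. Let $W = U^\perp = \{v \in V : B(v,e_1) = B(v,e_2) = 0\}$. Because $B$ is nondegenerate and the matrix of $B|_U$ in the basis $(e_1,e_2)$ is $\begin{pmatrix} 0 & 1 \\ 1 & 0 \end{pmatrix}$, which is itself nondegenerate, standard linear algebra gives $V = U \oplus W$ and $B|_W$ nondegenerate, with $\dim W = 2$. In particular $Q|_W$ is again a regular quadratic form in two variables, and additivity of $Q$ across the $B$-orthogonal decomposition $V = U \oplus W$ is immediate because for $u \in U$, $w \in W$, one has $Q(u+w) = Q(u) + Q(w) + B(u,w) = Q(u) + Q(w)$.

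Finally, I would apply the same construction to $W$: produce a symplectic pair $(e_3,e_4)$ in $W$ with $B(e_3,e_4) = 1$, set $a_3 = Q(e_3)$, $b_2 = Q(e_4)$, and obtain on $W$ the binary block $a_3 x_3^2 + x_3 x_4 + b_2 x_4^2$. Changing coordinates to the basis $(e_1,e_2,e_3,e_4)$ then puts $Q$ into the claimed normal form. The main (and really the only) subtle point is making sure that in characteristic $2$ the orthogonal-complement step preserves nondegeneracy of $B$ and realises the decomposition as a genuine orthogonal sum of quadratic spaces; this is where one must use that $B|_U$ is nondegenerate (equivalently, the symplectic pairing $B(e_1,e_2)=1$), rather than relying on a reflexive bilinear form coming from $Q$ itself, which in characteristic $2$ is alternating and hence singular on $U$ as a quadratic-form pairing in the naive sense.
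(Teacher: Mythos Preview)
Your argument is correct and is exactly the standard Witt-type splitting via a symplectic basis for the polar form that one finds in the cited reference. The paper itself does not give a proof of this lemma; it is stated with a citation to \cite[Cor.~7.32]{elman2008algebraic} and used as a black box, so there is nothing further to compare.
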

\begin{corollary}\label{diagalakcor}
Every regular quadratic form in four variables over $F$ is equivalent to a quadratic form in the form of
\[
a_1x_1^2+a_1x_1x_2+a_1a_2x_2^2+a_3x_3^2+a_3x_3x_4+a_3a_4x_4^2
\]
where $a_1,a_2,a_3,a_4\in F$.
\end{corollary}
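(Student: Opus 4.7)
The plan is to start from the normal form supplied by Lemma \ref{diagalak} and transform the two binary blocks independently into the desired shape. Since the blocks $\alpha_1 x_1^2 + x_1x_2 + \beta_1 x_2^2$ and $\alpha_3 x_3^2 + x_3x_4 + \beta_3 x_4^2$ share no variables, it suffices to prove the following local claim and then apply it twice: any binary expression $\alpha x^2 + xy + \beta y^2$ is equivalent, via an invertible linear change of the pair $(x,y)$, to one of the form $c x^2 + c xy + c d y^2$ for suitable $c, d \in F$.

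In the generic case $\alpha \neq 0$, I would perform the invertible substitution $y \mapsto \alpha y$, which yields
\[
\alpha x^2 + \alpha xy + \alpha^2 \beta y^2 = \alpha x^2 + \alpha xy + \alpha\cdot(\alpha\beta) y^2,
\]
so one can take $c=\alpha$ and $d=\alpha\beta$. This disposes of the main case.

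For the degenerate cases I would argue as follows. If $\alpha = 0$ but $\beta \neq 0$, swap $x$ and $y$; since the cross term $xy$ is symmetric, the block becomes $\beta x^2 + xy$, and the generic argument applies with $c=\beta$, $d=0$. If both $\alpha$ and $\beta$ vanish, the block is the hyperbolic plane $xy$; the invertible substitution $x \mapsto y_1 + y_2$, $y \mapsto y_1$ turns it into $y_1^2 + y_1 y_2$, which matches the required form with $c=1$ and $d=0$.

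There is no genuine obstacle in the argument: the equivalence is built from very simple unimodular substitutions acting on two variables at a time. The only care required is to enumerate the vanishing patterns of the coefficients outside the cross term, since the natural scaling $y \mapsto \alpha y$ breaks down precisely when $\alpha = 0$; these degenerate cases are handled by the swap and hyperbolic-plane substitutions above.
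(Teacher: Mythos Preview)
Your proof is correct and follows essentially the same route as the paper: start from the normal form of Lemma~\ref{diagalak} and rescale the second variable of each binary block by its leading coefficient (the paper writes this as $x_2\gets a_1x_2$, $x_4\gets a_3x_4$), then relabel. You are in fact slightly more careful than the paper, which tacitly assumes those leading coefficients are nonzero; your separate treatment of $\alpha=0$ via the swap and the hyperbolic-plane substitution fills that small gap.
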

\begin{proof}
We start from the canonical form described in Lemma \ref{diagalak}.After substituting $x_2\gets a_1x_2$ and $x_4\gets a_3x_4$, we get that $a_1x_1^2+a_1x_1x_2+a_1^2b_1x_2^2+a_3x_3^2+a_3x_3x_4+a_3^2b_2x_4^2$. After setting $a_2=a_1b_1$ and $a_4=a_3b_2$ we arrive to the form $a_1x_1^2+a_1x_1x_2+a_1a_2x_2^2+a_3x_3^2+a_3x_3x_4+a_3a_4x_4^2$.
\end{proof}

The following lemma \cite[Theorem 6.4.11]{voight2018quaternion} highlights a connection between the isotropy of quadratic forms and the splitting of quaternion algebras: 
\begin{lemma}[Hilbert equation]\label{hilberteq}
A quaternion algebra $\left[\frac{a,b}{F}\right)$ is split if and only if $bx^2+bxy+aby^2=1$ has a solution with $x,y\in F$.
\end{lemma}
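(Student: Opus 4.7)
I would approach the lemma by reinterpreting the equation $bx^2 + bxy + aby^2 = 1$ as a norm condition for the Artin--Schreier \'etale extension $L = F[i]/(i^2+i+a)$. A direct computation gives
\[
N_{L/F}(x + yi) = (x + yi)\bigl(x + y(i+1)\bigr) = x^2 + xy + ay^2,
\]
where the simplification is automatic in characteristic $2$. Rescaling $(x, y) \mapsto (x/b, y/b)$ shows that $bx^2 + bxy + aby^2 = 1$ has a solution in $F$ if and only if $b \in N_{L/F}(L^\times)$, using that norms form a group when $L$ is a field (or trivially, when $L \cong F \times F$, every element of $F^\times$ is a norm). The lemma thus reduces to the classical equivalence between $\left[\frac{a,b}{F}\right)$ being split and $b$ being a norm from $L$.

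For the $(\Leftarrow)$ direction, suppose $b = N_{L/F}(\lambda)$ for some $\lambda \in L$, and let $\bar{\lambda}$ denote the image of $\lambda$ under the nontrivial $F$-automorphism $i \mapsto i+1$. The defining relation $ji = (i+1)j$ extends linearly to $\lambda j = j \bar{\lambda}$ for all $\lambda \in L$. Then
\[
(\lambda - j)(\bar{\lambda} - j) = \lambda\bar{\lambda} - \lambda j - j\bar{\lambda} + j^2 = b + b = 0,
\]
since $-\lambda j - j\bar{\lambda} = -2 j\bar{\lambda}$ and $2b$ both vanish in characteristic $2$. Because $j \notin L$, both factors are nonzero, exhibiting a zero divisor in the quaternion algebra and forcing it to split.

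For the $(\Rightarrow)$ direction, I would decompose $A = L \oplus Lj$ and use the commutation relation $\lambda j = j\bar{\lambda}$ together with $\bar{j} = j$ to compute the reduced norm as $N(z_1 + jz_2) = N_{L/F}(z_1) + b \cdot N_{L/F}(z_2)$; the mixed term $z_1 j z_2 + j z_2 \bar{z_1} = 2 j \bar{z_1} z_2$ vanishes by characteristic $2$. If $A$ is split, then $N$ is isotropic, yielding $(z_1, z_2) \ne (0, 0)$ in $L^2$ with $N_{L/F}(z_1) = b\, N_{L/F}(z_2)$. When $L$ is a field, necessarily $z_2 \ne 0$ and $N_{L/F}(z_2) \ne 0$, so $b = N_{L/F}(z_1 z_2^{-1})$ and rescaling supplies an explicit $(x, y)$ satisfying the target equation.

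The main obstacle is the degenerate case in which $L$ is not a field, that is, when $X^2 + X + a$ has a root $r \in F$. In that situation $A$ is automatically split, but one must still write down a solution of $bx^2 + bxy + aby^2 = 1$ explicitly. This is handled via the factorization $x^2 + xy + ay^2 = (x + ry)(x + (r+1)y)$, which shows that every element of $F^\times$ is represented by $N_{L/F}$, so $x^2 + xy + ay^2 = b^{-1}$ admits a solution that can be rescaled to yield the required $(x, y)$.
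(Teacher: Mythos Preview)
Your argument is correct. The paper does not actually prove this lemma: it is stated with a citation to \cite[Theorem 6.4.11]{voight2018quaternion} and no proof is given. Your approach---identifying $bx^2+bxy+aby^2=1$ with the condition $b\in N_{L/F}(L^\times)$ for the Artin--Schreier algebra $L=F[i]$, producing the zero divisor $(\lambda-j)(\bar\lambda-j)=0$ for one direction, and reading off $b=N_{L/F}(z_1z_2^{-1})$ from an isotropic vector of $\operatorname{nrd}$ for the other---is the standard one and matches the treatment in Voight. The separate handling of the degenerate case $X^2+X+a$ split in $F$ via the factorization $x^2+xy+ay^2=(x+ry)(x+(r+1)y)$ is also fine; note that your $(\Leftarrow)$ computation already covers this case, so the extra paragraph is only needed for $(\Rightarrow)$.
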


Now if $X^2+X+a$ has a solution in $F$ (in this case put $K_a:=F$) then the form $x^2+xy+ay^2$ is equivalent to $x^2+xy$ which represents all elements in $F$. Otherwise let $\alpha$ be a root of the polynomial $X^2+X+a$ in a quadratic extension $K_a/F$. Then $x^2+xy+ay^2=N_{K/F}(x+y\alpha)$ is the norm form. Therefore in case $F$ is a local field of characteristic $2$, we may apply Thm.\ \ref{LCFTneeded} to deduce

\begin{lemma}\label{localrecip}
Assume $F$ is a local field of characteristic $2$. Then we have $F^\times/N_{K_a/F}(K_a^\times)\cong \operatorname{Gal}(K_a/F)$ is cyclic of order at most $2$. The subgroup $N_{K_a/F}(K_a^\times)\leq F^\times$ is uniquely determined by the extension $K_a/F$. In particular, the regular quadratic form
\[
a_1x_1^2+a_1x_1x_2+a_1a_2x_2^2+a_3x_3^2+a_3x_3x_4+a_3a_4x_4^2
\]
has no nontrivial solutions in $F$ if and only if $K_{a_2}=K_{a_4}$ is a quadratic extension of $F$ such that exactly one of $a_1$ and $a_3$ is represented by the form $x^2+xy+a_2y^2$. We have $K_{a_2}=K_{a_4}$ if and only if $x^2+x+a_2+a_4$ splits in $F$.
\end{lemma}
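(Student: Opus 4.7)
The opening claim is a direct application of Theorem \ref{LCFTneeded}: the extension $K_a/F$ has degree $1$ or $2$ and is therefore abelian, so the theorem yields both the isomorphism $F^\times/N_{K_a/F}(K_a^\times)\cong \operatorname{Gal}(K_a/F)$ and the fact that the norm subgroup determines (and is determined by) the extension.

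For the ``in particular'' part I would rewrite $Q$ as $a_1 f(x_1,x_2)+a_3 g(x_3,x_4)$ with $f(x,y)=x^2+xy+a_2y^2$ and $g(x,y)=x^2+xy+a_4y^2$. As recalled just before the lemma, each of $f,g$ is either universal over $F$ (if $K_{a_i}=F$, equivalently $X^2+X+a_i$ splits in $F$) or the anisotropic norm form of the quadratic extension $K_{a_i}/F$, whose nonzero values on $F^2$ are exactly $N_{K_{a_i}/F}(K_{a_i}^\times)$. If $K_{a_2}=F$ or $K_{a_4}=F$ then $Q$ is manifestly isotropic, so one may assume both extensions are quadratic. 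Then $f$ and $g$ are anisotropic, a nontrivial zero of $Q$ forces $f(x_1,x_2), g(x_3,x_4)\in F^\times$, and using $-1=1$ isotropy of $Q$ is equivalent to
$$a_1 N_{K_{a_2}/F}(K_{a_2}^\times)\;\cap\;a_3 N_{K_{a_4}/F}(K_{a_4}^\times)\;\neq\;\emptyset.$$

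I would then branch on whether $K_{a_2}=K_{a_4}$. If $K_{a_2}\neq K_{a_4}$, the compositum $L:=K_{a_2}K_{a_4}$ is biquadratic with $\operatorname{Gal}(L/F)\cong\operatorname{Gal}(K_{a_2}/F)\times\operatorname{Gal}(K_{a_4}/F)$; by the (inclusion-reversing) lattice correspondence in Theorem \ref{LCFTneeded}, $N_{L/F}(L^\times)=N_{K_{a_2}/F}(K_{a_2}^\times)\cap N_{K_{a_4}/F}(K_{a_4}^\times)$, so the natural homomorphism
$$F^\times/N_{L/F}(L^\times)\hookrightarrow F^\times/N_{K_{a_2}/F}(K_{a_2}^\times)\,\times\, F^\times/N_{K_{a_4}/F}(K_{a_4}^\times)$$
is an injection between groups of order $4$, hence a bijection; consequently some $e\in F^\times$ realizes the pair $(a_1\bmod N_{K_{a_2}}, a_3\bmod N_{K_{a_4}})$, giving isotropy. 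If instead $K_{a_2}=K_{a_4}$ with common norm group $N$, then $a_1 N\cap a_3 N\neq\emptyset$ iff $a_1 N=a_3 N$; since $F^\times/N$ has order $2$, anisotropy is equivalent to $a_1,a_3$ lying in different cosets of $N$, i.e.\ to exactly one of them being represented by $f$.

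The final equivalence is Artin--Schreier: $K_a=F(\alpha)$ with $\alpha^2+\alpha=a$ depends only on the class of $a$ modulo $\wp(F):=\{c^2+c:c\in F\}$, so $K_{a_2}=K_{a_4}$ iff $a_2+a_4\in\wp(F)$ iff $x^2+x+a_2+a_4$ has a root in $F$; and since the two roots of such a polynomial differ by $1$, having one root in $F$ means it splits. The main step I expect to need care is the compositum identity $N_{L/F}(L^\times)=N_{K_{a_2}/F}\cap N_{K_{a_4}/F}$, but this is a standard consequence of the order-reversing bijection in Theorem \ref{LCFTneeded}, namely that the join of two abelian extensions corresponds to the intersection of the associated norm subgroups.
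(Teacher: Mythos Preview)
Your proof is correct and follows the same outline as the paper's: reduce isotropy of $Q$ to nonemptiness of the coset intersection $a_1 N_{K_{a_2}/F}(K_{a_2}^\times)\cap a_3 N_{K_{a_4}/F}(K_{a_4}^\times)$, branch on whether $K_{a_2}=K_{a_4}$, and invoke Artin--Schreier theory for the final equivalence. The only difference is cosmetic: in the case $K_{a_2}\neq K_{a_4}$ the paper argues directly that this intersection is a full coset of $N_{K_{a_2}/F}(K_{a_2}^\times)\cap N_{K_{a_4}/F}(K_{a_4}^\times)$ (hence nonempty, since two distinct index-$2$ subgroups generate $F^\times$), whereas you route the same fact through the compositum $L=K_{a_2}K_{a_4}$ and the bijection $F^\times/N_{L/F}(L^\times)\to F^\times/N_{K_{a_2}}\times F^\times/N_{K_{a_4}}$.
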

\begin{proof}
If $K_{a_2}\neq K_{a_4}$ then the intersection $a_1N_{K_{a_2}/F}(K_{a_2}^\times)\cap a_3N_{K_{a_4}/F}(K_{a_4}^\times)$ is a full coset of $N_{K_{a_2}/F}(K_{a_2}^\times)\cap N_{K_{a_4}/F}(K_{a_4}^\times)$ and is therefore nonempty. The last statement follows from Artin--Schreier theory.
\end{proof}

In order to handle quadratic forms, just like in odd characteristics, we will need to introduce a quadratic residue symbol. If $\F$ is a finite field of characteristic $2$ and $\pi$ is an irreducible polynomial in $\F[t]$, then every element in $\F[t]/(\pi)$ will be a square (as the factor ring is a finite field of characteristic 2), so the definition will need to differ slightly. The following definition and lemma with proof can be found in \cite{conrad2010quadratic}.

\begin{definition}
For a monic irreducible $\pi$ in $\F[t]$ and any $f\in \F(t)$ that has no pole at $\pi$, let
\[
[f,\pi):=
\begin{cases}
      0, & \text{if}\ f\equiv x^2+x\pmod{\pi} \ \text{for some} \ x\in \F[t] \\
      1, & \text{otherwise}
    \end{cases}
\]
If $[f,\pi)=0$, we say that $f$ is a \textbf{quadratic residue} modulo $\pi$. Similarly, for the place at $\infty$ we define
\[
[f,\infty):=
\begin{cases}
      0, & \text{if}\ f\equiv x^2+x \pmod{t^{-1}}\ \text{for some} \ x\in \F[t^{-1}] \\
      1, & \text{otherwise}
    \end{cases}
\]
whenever $f\in\F(t)$ has no pole at $\infty$ (ie.\ $\deg(f)\leq 0$). If $f$ has a pole at the (finite or infinite) place $\pi$ then $[f,\pi)$ has no meaning.
\end{definition} 

\begin{lemma}\label{kvadmaradekprop}
The symbol $[f,\pi)$ has the following properties:
\begin{enumerate}
    \item if $f_1\equiv f_2 \,\pmod{\pi} $, then $[f_1,\pi)=[f_2,\pi)$,
    \item $[f,\pi)\equiv f+f^2+\ldots+f^{q^{\text{deg} \pi}/2}\, \pmod{\pi}$, where $q=|F|$,
    \item $[f_1+f_2,\pi)=[f_1,\pi)+[f_2,\pi)$,
    \item $[f^2+f,\pi)=0$.
\end{enumerate}
\end{lemma}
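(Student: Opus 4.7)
The plan is to recognize $[f,\pi)$ as the Artin--Schreier symbol for the residue field $L:=\F[t]/(\pi)$, a finite field of characteristic $2$ with $|L|=q^{\deg\pi}$. Part (1) is immediate, since $\bar f\in L$ depends only on $f\bmod\pi$. The key tool for (2)--(4) is the $\F_2$-linear Artin--Schreier map $\wp\colon L\to L$ defined by $\wp(x)=x^2+x$, whose kernel is $\{x\in L:x^2=x\}=\F_2$ and whose image therefore has $\F_2$-codimension $1$ in $L$. By the definition, $[f,\pi)=0$ if and only if $\bar f\in\mathrm{Im}(\wp)$.

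To prove (2), I would identify $\mathrm{Im}(\wp)$ with the kernel of the absolute trace. Writing $q=2^k$ so that $L/\F_2$ has degree $m:=k\deg\pi$, the absolute trace takes the form
\[
T:=\Tr_{L/\F_2}\colon L\to\F_2,\qquad T(x)=\sum_{i=0}^{m-1}x^{2^i}=x+x^2+x^4+\dots+x^{q^{\deg\pi}/2}.
\]
Since $T$ is $\F_2$-linear and surjective, $\ker T$ has $\F_2$-codimension $1$. For any $y\in L$ one has $T(\wp(y))=T(y^2)+T(y)=2T(y)=0$ (using that Frobenius commutes with $T$), so $\mathrm{Im}(\wp)\subseteq\ker T$; comparing $\F_2$-codimensions forces equality $\mathrm{Im}(\wp)=\ker T$. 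Therefore $[f,\pi)=0$ iff $T(\bar f)=0$, and since both sides lie in $\F_2\subset L$, this gives the identity $[f,\pi)=T(\bar f)$ in $L$, which is exactly the congruence claimed in (2).

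Parts (3) and (4) then fall out immediately. Additivity $[f_1+f_2,\pi)=[f_1,\pi)+[f_2,\pi)$ reduces to the $\F_2$-linearity of $T$, and $[f^2+f,\pi)=0$ follows from $\overline{f^2+f}=\wp(\bar f)\in\mathrm{Im}(\wp)=\ker T$. The infinite place is handled in the same way with $L$ replaced by $\F[t^{-1}]/(t^{-1})\cong\F$. The main substantive step is the identification $\mathrm{Im}(\wp)=\ker T$; everything else is essentially bookkeeping, with only minor care required to match the explicit polynomial $f+f^2+\dots+f^{q^{\deg\pi}/2}$ to the absolute trace formula.
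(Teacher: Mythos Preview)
Your proof is correct. Note, however, that the paper does not actually supply its own proof of this lemma: it is stated with a reference to \cite{conrad2010quadratic}, and no argument is given in the text. Your approach---identifying $[f,\pi)$ with the absolute trace $\Tr_{L/\F_2}(\bar f)$ by showing that the image of the Artin--Schreier map $\wp(x)=x^2+x$ coincides with the kernel of the trace (both being $\F_2$-hyperplanes in $L$, one contained in the other)---is the standard one and is precisely what the cited reference does. The only cosmetic remark is that your phrase ``Frobenius commutes with $T$'' is shorthand for the computation $T(y^2)=T(y)^2=T(y)$ (the first equality by the Freshman's dream, the second since $T(y)\in\F_2$); this is fine but could be stated more explicitly.
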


\subsection{Local lemmas}\label{localcomp}

We denote by $v_f$ the $f$-adic valuation on $\FF_{2^k}(t)$ for a (finite or infinite) prime $f\in \FF_{2^k}(t)$, by $\FF_{2^k}(t)_{(f)}$ the $f$-adic completion, and by $\FF_{2^k}(t)_{(f)}^+:=\{u\in \FF_{2^k}(t)_{(f)}\mid v_f(u)\geq 0\}$ its valuation ring. 

We are interested in the range of the quadratic form $x^2+xy+ay^2$ for some $a\in \FF_{2^k}(t)$. Note that this is the norm form of the quadratic Artin--Schreier extension adjoining the root of $X^2+X+a$.

\begin{definition}\label{minminmin}
For $a \in \FF_{2^k}(t)$ we call the quadratic form $x^2+xy+ay^2$ \emph{minimal} if all the poles of $a$ (including $\infty$) have odd multiplicity. 
\end{definition}
Note that for a finite prime $f$ the multiplicity of the the pole of $a$ is by definition the exponent of $f$ in the denominator of $a$. The multiplicity of the pole of $a$ at $\infty$ is the degree of $a$ if it is positive and $0$ otherwise. The only elements $a\in \FF_{2^k}(t)$ that have no poles are the constants $a\in \FF_{2^k}$ in which case the form $x^2+xy+ay^2$ is minimal. The fact that each norm form is equivalent to a minimal follows easily from Artin--Schreier theory. We include an algorithmic proof as we need its running time.

\begin{lemma}\label{minimalform}
Any quadratic form $x^2+xy+ay^2$ with is equivalent to a minimal form. The equivalent minimal form can be found in polynomial time. 
\end{lemma}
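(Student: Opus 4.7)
The plan is to find $b\in\FF_{2^k}(t)$ such that $a':=a+b^2+b$ has only poles of odd multiplicity, and then to observe that the substitution $(x,y)\mapsto(x+by,y)$ yields the isometry $x^2+xy+a'y^2\cong x^2+xy+ay^2$ (a direct expansion, reflecting the Artin--Schreier nature of the form, since in characteristic $2$ we have $(x+by)^2+(x+by)y+ay^2=x^2+xy+(a+b^2+b)y^2$). Thus the whole problem reduces to algorithmically modifying $a$ within its coset modulo $\{z^2+z:z\in\FF_{2^k}(t)\}$.

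The core idea is to attack the poles one at a time, exploiting that every finite residue field of $\FF_{2^k}(t)$ is perfect of characteristic $2$. Suppose $a$ has a pole of even multiplicity $2m$ at a finite monic irreducible $f$, and write the top partial-fraction term as $p/f^{2m}$ with $\deg p<\deg f$. I would pick $d\in\FF_{2^k}[t]$ of degree less than $\deg f$ with $d^2\equiv p\pmod f$ --- this exists because Frobenius is a bijection on the finite field $\FF_{2^k}[t]/(f)$ --- and set $b_f:=d/f^m$. Then $b_f^2=d^2/f^{2m}$, so the top pole term of $a+b_f^2+b_f$ at $f$ becomes $(p+d^2)/f^{2m}$, whose numerator is divisible by $f$; hence the pole order at $f$ drops by at least one. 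Because $\deg b_f=\deg d-m\deg f<0$, the chosen $b_f$ has no pole at $\infty$ and none at any other finite prime, so the pole structure elsewhere is untouched.

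The analogous step at $\infty$: if $\deg a=2m>0$ with leading coefficient $c\in\FF_{2^k}$, pick $d\in\FF_{2^k}$ with $d^2=c$ (possible since $\FF_{2^k}$ is perfect) and set $b_\infty:=dt^m$. Then $b_\infty^2$ kills the leading term of $a$ while the added $b_\infty$ has degree only $m<2m$, so $\deg(a+b_\infty^2+b_\infty)<2m$; and since $b_\infty$ is a polynomial no new finite pole appears.

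I would then iterate these two reductions until no pole has even multiplicity, feeding the updated $a$ back into the procedure after each step. Each reduction strictly decreases the leading even pole order at the prime handled and leaves all other pole orders alone, so termination occurs in a number of steps polynomial in the total pole multiplicity of the original $a$, which is polynomial in the input bit-size. The dominant work per step is a square-root computation in the residue field $\FF_{2^k}[t]/(f)$ or in $\FF_{2^k}$, both polynomial. I do not anticipate a genuine obstacle: the only conceptually delicate point --- that fixing one pole must not worsen another --- is handled up front by the locality of $b_f$ and $b_\infty$; the remaining work is bookkeeping on the partial-fraction decomposition of $a$ (requiring the factorization of its denominator, which is polynomial over $\FF_{2^k}[t]$).
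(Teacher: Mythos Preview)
Your proposal is correct and takes essentially the same approach as the paper: both reduce even pole orders one prime at a time via the Artin--Schreier substitution $x\mapsto x+by$, choosing $b=d/f^m$ (resp.\ $dt^m$) where $d$ is a square root in the residue field obtained from bijectivity of Frobenius. Your partial-fraction bookkeeping and explicit check that $b_f$ (resp.\ $b_\infty$) introduces no new poles elsewhere are minor presentational refinements of the same argument, not a different route.
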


\begin{proof}
Assume $a=\frac{g_1}{f^{2r}h_1}$ with $f\nmid g_1,h_1$ for some finite prime $f$. Since $\FF_{2^k}[t]/(f)$ is a finite field of characteristic $2$, the $2$-Frobenius is bijective on $\FF_{2^k}[t]/(f)$. In particular, there exists a polynomial $g\in\FF_{2^k}[t]$ such that $f\mid g^2h_1+g_1$ (one can find $g$ by squaring $\frac{g_1}{h_1}$ $k\deg(f)-1$-times modulo $f$). So we may replace the variable $x$ by $x_1=x+\frac{gy}{f^r}$ to obtain
\begin{align*}
x^2+xy+ay^2=x_1^2+\frac{g^2y^2}{f^{2r}}+x_1y+\frac{gy^2}{f^r}+\frac{g_1y^2}{f^{2r}h_1}=\\
=x_1^2+x_1y+\frac{g^2h_1+g_1+f^rh_1g}{f^{2r}h_1}y^2
\end{align*}
and $a':=\frac{g^2h_1+g_1+f^rh_1g}{f^{2r}h_1}$ has one less $f$ in the denominator. Repeating the process for all finite primes in the denominator of $a$ we are reduced to handle the case of the infinite prime. This is entirely analogous: assume we have $a=\frac{g_1}{h_1}$ with $2r:=\deg g_1-\deg h_1$ even and positive. Since the leading coefficient of $a$ is a square in $\mathbb{F}_{2^k}$, there exists $0\neq c\in \mathbb{F}_{2^k}$ such that $\deg (g_1+c^2t^{2r}h_1)<\deg g_1$. Therefore putting $x_1=x+ct^ry$ we obtain the form
\begin{align*}
x^2+xy+ay^2=x_1^2+c^2t^{2r}y^2+x_1y+ct^ry^2+\frac{g_1}{h_1}y^2=\\
=x_1^2+x_1y+\frac{h_1ct^r+h_1c^2t^{2r}+g_1}{h_1}y^2
\end{align*}
such that $a'=a+ct^r+c^2t^{2r}=\frac{h_1ct^r+h_1c^2t^{2r}+g_1}{h_1}$ has smaller degree than $a$. Repeating this step several times we deduce the statement.
\end{proof}
\begin{remark}
The above proof also shows that the minimal form of $x^2+xy+ay^2$ is unique up to an additive constant of the form $\alpha^2+\alpha$ with $\alpha\in\FF_{2^k}$.
\end{remark}

By the local-global principle (Theorem \ref{HM}) we are reduced to identifying the range of a minimal quadratic form $x^2+xy+ay^2$ locally at each place $f$ of $\FF_{2^k}(t)$. We may apply Lemma \ref{localrecip}, however, for our purposes we need to identify explicit congruence conditions on $c$ being in the range. Put $K_{a,f}$ for the splitting field of the polynomial $X^2+X+a$ over $\FF_{2^k}(t)_f$. We distinguish two cases whether or not $a$ has a pole at $f$, ie.\ whether or not the splitting field of $X^2+X+a$ ramifies at $f$. At first we treat the case when $a$ is an $f$-adic integer. 

\begin{lemma}\label{2valtmohatosag} Assume $v_f(a)\geq 0$.
\begin{enumerate}
    \item If $v_f(c)$ is even then the equation $x^2+xy+ay^2=c$ has a solution in $\FF_{2^k}(t)_{(f)}$.
    \item If $v_f(c)$ is odd then the equation $x^2+xy+ay^2=c$ has a solution in $\FF_{2^k}(t)_{(f)}$ if and only if $[a,f)=0$. 
\end{enumerate}
\end{lemma}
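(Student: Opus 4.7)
The plan is to reduce each case to a statement in the residue field $\FF_{2^k}[t]/(f)$ and apply Hensel's lemma (or, for the converse of part (2), elementary valuation arithmetic). Throughout, let $\pi$ be a uniformizer at $f$ and write $\overline{\cdot}$ for reduction modulo $\pi$; the residue field is perfect, being a finite field of characteristic $2$, so squaring is bijective on it.

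For part (1), I would first exploit the homogeneity of the form to normalize the valuation: if $v_f(c) = 2r$, writing $c = \pi^{2r}c'$ with $v_f(c') = 0$ and substituting $(x,y) \leftarrow (\pi^r x, \pi^r y)$ reduces the problem to solving $x^2+xy+ay^2 = c'$ over the valuation ring (here the hypothesis $v_f(a) \geq 0$ guarantees that the polynomial has coefficients in $\FF_{2^k}(t)_{(f)}^+$). Since the residue field is perfect, I can pick $\overline{x_0}$ with $\overline{x_0}^2 = \overline{c'}$, so that $(\overline{x_0},0)$ solves the equation modulo $\pi$. In characteristic $2$, the partial derivative of $x^2+xy+ay^2-c'$ with respect to $y$ equals $x$, which is a unit at $(\overline{x_0},0)$ since $c'$ is a unit. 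The multivariate Hensel's lemma recalled in the preliminaries then lifts this to a solution over $\FF_{2^k}(t)_{(f)}$.

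For the ``if'' direction of part (2), the hypothesis $[a,f) = 0$ means $a \equiv \alpha_0^2+\alpha_0 \pmod{\pi}$ for some $\alpha_0$ in the residue field. Since $X^2+X+a$ has derivative $1$, the one-variable Hensel's lemma lifts $\alpha_0$ to $\alpha \in \FF_{2^k}(t)_{(f)}^+$ with $\alpha^2+\alpha = a$. The form then factors as
\[
x^2+xy+ay^2 = (x+\alpha y)\bigl(x+(\alpha+1)y\bigr),
\]
and the invertible linear substitution $u = x+\alpha y$, $v = x+(\alpha+1)y$ (determinant $1$) reduces the equation to $uv = c$, which is solved e.g.\ by $u = c$, $v = 1$. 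For the ``only if'' direction, suppose a solution $(x,y)$ exists with $v_f(c)$ odd; setting $m = \min(v_f(x),v_f(y))$ and $x = \pi^m x'$, $y = \pi^m y'$ with $(\overline{x'},\overline{y'}) \neq (0,0)$, the identity
\[
v_f(c) = 2m + v_f\bigl((x')^2+x'y'+a(y')^2\bigr)
\]
forces the right-hand valuation to be odd and strictly positive, hence $\overline{x'}^2+\overline{x'}\,\overline{y'}+\overline{a}\,\overline{y'}^2 = 0$ in the residue field. If $\overline{y'}=0$ this forces $\overline{x'} = 0$, contradicting primitivity; otherwise $\overline{x'}/\overline{y'}$ is a root of $X^2+X+\overline{a}$, contradicting $[a,f) \neq 0$. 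The main obstacle is bookkeeping: checking that the Hensel hypothesis holds in part (1) and carefully splitting off the case $\overline{y'}=0$ at the end.
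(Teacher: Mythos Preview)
Your proof is correct. The approach, however, differs from the paper's. The paper identifies $x^2+xy+ay^2$ as the norm form of the Artin--Schreier extension $K_{a,f}/\FF_{2^k}(t)_{(f)}$, observes that this extension is unramified when $v_f(a)\geq 0$ (by Hensel), and then invokes local class field theory (Theorem~\ref{LCFTneeded}) to conclude that the norm group contains all units and equals the whole multiplicative group precisely when $K_{a,f}=\FF_{2^k}(t)_{(f)}$, i.e.\ when $[a,f)=0$. Your argument is more hands-on: you lift an explicit approximate solution via the multivariate Hensel's lemma for part~(1), factor the form over the base field for the ``if'' in part~(2), and use a direct valuation-counting argument for the ``only if''. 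Your route is entirely self-contained and avoids local class field theory, which is a genuine simplification for this lemma taken in isolation; the paper's route is shorter to state and places the result in the broader framework (norm groups of unramified extensions) that is reused elsewhere, e.g.\ in Lemma~\ref{localrecip}.
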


\begin{proof}
Note that by Hensel's lemma the extension $K_{a,f}/\FF_{2^k}(t)_f$ is unramified. Therefore by Thm.\ \ref{LCFTneeded} the image of the norm map $N_{K_{a,f}/\FF_{2^k}(t)_f}\colon K_{a,f}^\times\to \FF_{2^k}(t)_f^\times$ contains the group of units $(\FF_{2^k}(t)_f^+)^\times$ in the ring of integers. Further, $N_{K_{a,f}/\FF_{2^k}(t)_f}$ is onto if and only if $K_{a,f}=\FF_{2^k}(t)_f$. The latter is equivalent to $[a,f)=0$.
\end{proof}



\begin{lemma}\label{4valtmohatosag}
Let $a_1,a_3\in \FF_{2^k}[t]$ be square-free polynomials with $\gcd(a_1,a_3)=1$ and $a_2,a_4\in \FF_{2^k}(t)$. Let $f$ be a place, ie.\ either a monic irreducible polynomial or $f=\infty$ such that $v_f(a_1a_3)$ is odd. Assume that neither $a_2$ nor $a_4$ has a pole at $f$. Then the equation $a_1x_1^2+a_1x_1x_2+a_1a_2x_2^2+a_3x_3^2+a_3x_3x_4+a_3a_4x_4^2=0$ has a nontrivial solution in $\FF_{2^k}(t)_{(f)}$ if and only if at least one of the two conditions holds:
\begin{enumerate}
    \item $[a_2,f)=0$
    \item $[a_4,f)=0$
\end{enumerate}
\end{lemma}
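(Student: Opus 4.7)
The plan is to prove the two directions separately, rewriting the 4-variable form as $a_1 Q_1(x_1,x_2) + a_3 Q_3(x_3,x_4)$, where $Q_1(x,y) := x^2 + xy + a_2 y^2$ and $Q_3(x,y) := x^2 + xy + a_4 y^2$, so that finding a nontrivial zero amounts to finding a common nonzero value represented by both $a_1 Q_1$ and $a_3 Q_3$ (using that $-1=1$ in characteristic $2$). The point of singling out the two conditions $[a_2,f)=0$ and $[a_4,f)=0$ is that each of them already makes one of the binary forms isotropic by itself.

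For sufficiency, suppose $[a_2, f) = 0$. By definition of the symbol, $X^2 + X + a_2$ has a root modulo $f$; since its derivative equals $1$, Hensel's lemma lifts this root to $\alpha \in \FF_{2^k}(t)_{(f)}$ with $\alpha^2 + \alpha + a_2 = 0$. Then $(x_1, x_2, x_3, x_4) = (\alpha, 1, 0, 0)$ is a nontrivial zero, because the whole form evaluates to $a_1(\alpha^2+\alpha+a_2)=0$. The case $[a_4, f) = 0$ is symmetric.

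For necessity, assume $[a_2, f) = [a_4, f) = 1$ and derive a contradiction. First observe that $Q_1$ and $Q_3$ are anisotropic over $\FF_{2^k}(t)_{(f)}$: an isotropic vector $(x,y)$ with $y=0$ forces $x=0$, while $y\neq 0$ would make $x/y$ a root of $X^2+X+a_2$ or $X^2+X+a_4$, contradicting the hypothesis on $[a_2,f)$ or $[a_4,f)$. Hence in any nontrivial zero of the 4-variable form, both $(x_1,x_2)$ and $(x_3,x_4)$ are nonzero, producing a common nonzero value $c := a_1 Q_1(x_1, x_2) = a_3 Q_3(x_3, x_4)$. Applying Lemma~\ref{2valtmohatosag}(2) to $Q_1$ representing $c/a_1$ and to $Q_3$ representing $c/a_3$, the conditions $[a_2,f)=[a_4,f)=1$ force both $v_f(c/a_1)$ and $v_f(c/a_3)$ to be even; subtracting gives $v_f(a_1)\equiv v_f(a_3)\pmod 2$, so $v_f(a_1 a_3)$ is even, contradicting the hypothesis. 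The main obstacle is keeping the valuation bookkeeping uniform across finite and infinite $f$, but since $a_2,a_4$ are $f$-adic integers by assumption and Lemma~\ref{2valtmohatosag} is already stated at any place, the argument requires no case split.
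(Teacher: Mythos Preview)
Your proof is correct. The paper's own proof is the one-line remark ``This is a combination of Lemmas~\ref{localrecip} and~\ref{2valtmohatosag}'', so your argument amounts to unpacking that combination, but you actually manage to bypass Lemma~\ref{localrecip} altogether: for sufficiency you exhibit an explicit isotropic vector $(\alpha,1,0,0)$ via Hensel, and for necessity you reduce to a clean parity contradiction using only Lemma~\ref{2valtmohatosag}. The paper's route, by contrast, first invokes Lemma~\ref{localrecip} (which rests on local class field theory and the structure of norm groups) to characterize anisotropy of the four-variable form, and then translates the norm-image condition through Lemma~\ref{2valtmohatosag}. Your valuation bookkeeping ($v_f(c/a_1)$ and $v_f(c/a_3)$ both even forces $v_f(a_1a_3)$ even) is exactly the explicit content of that translation in this unramified situation, and it is arguably more elementary since it avoids the detour through $K_{a_2}=K_{a_4}$ and norm cosets. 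Both approaches ultimately lean on Lemma~\ref{2valtmohatosag}, whose proof already uses Theorem~\ref{LCFTneeded}, so the dependence on class field theory is not fully removed either way.
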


\begin{proof}
This is a combination of Lemmas \ref{localrecip} and \ref{2valtmohatosag}.
\end{proof}

Now we turn our attention to the case when $a$ has a pole at $f$ (ie.\ $K_{a,f}$ ramifies). Note that unlike in the case of characteristic $0$ there exist infinitely many ramified quadratic extensions of local fields of characteristic $2$. By Lemma \ref{minimalform} the pole must be of odd degree $2r+1$ therefore the following lemma is relevant. In this case it is more convenient to multiply by $f^{2r+1}$ and put $b=af^{2r+1}$ which is an $f$-adic unit. Note that  $c$ is in the range of the quadratic form $x^2+xy+ay^2$ if and only if so is $cd^2$ for all $0\neq d\in \FF_{2^k}(t)$ therefore we may rescale $c$ by a square element as convenient.

\begin{lemma}\label{3valtsolv}
Let  $b,c$ be in $\FF_{2^k}(t)_{(f)}$ such that $v_f(b)=0$ (ie.\ $b$ is an $f$-adic unit) and $v_f(c)=0$ or $1$. Then the equation $$f^{2r+1}x^2+f^{2r+1}xy+by^2=cf^{2r}$$ has a solution in $\FF_{2^k}(t)_{(f)}$ if and only if it has a solution modulo $f^{4r+3}$. All such solutions lie in the valuation ring $\FF_{2^k}(t)_{(f)}^+$.
\end{lemma}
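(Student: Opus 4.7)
The forward direction of the equivalence is immediate, so I focus on the converse together with the claim that every solution has nonnegative $f$-adic valuation. The argument has three ingredients: a valuation analysis to locate the admissible $(v_f(x), v_f(y))$, the one-variable Hensel lemma applied to whichever coordinate is a unit, and a rescaling trick for the remaining case where neither is a unit.

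First, let $\alpha = v_f(x)$ and $\beta = v_f(y)$ for a (possibly approximate) solution and examine the three LHS terms, whose valuations are $2r+1+2\alpha$ (odd), $2r+1+\alpha+\beta$, and $2\beta$ (even); the RHS has valuation $2r$ or $2r+1$. Because the odd and even terms cannot cancel, a short case analysis on which of the three terms is smallest (and whether $v_f(c)=0$ or $1$) forces $\alpha,\beta \geq 0$ and pins $(\alpha,\beta)$ down to one of two configurations: (A) $\alpha=0$, $\beta \geq r+1$, $v_f(c)=1$; or (B) $\beta=r$, $\alpha \geq 0$, $v_f(c)=0$ (so necessarily $r \geq 1$ when both $\alpha,\beta \geq 1$). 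The same analysis, with ``$=$'' replaced by ``$\equiv \pmod{f^{4r+3}}$'', shows that any approximate solution likewise lies in the valuation ring and falls under (A) or (B).

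Next, I apply Hensel. Setting $F(x,y) := f^{2r+1}(x^2+xy) + by^2 - cf^{2r}$, the characteristic~$2$ identity gives $\partial F/\partial x = f^{2r+1}y$ and $\partial F/\partial y = f^{2r+1}x$. When $y_0$ is a unit (configuration (A), or the subcase $\alpha=0$ of (B)), $v_f(\partial F/\partial x(x_0,y_0))=2r+1$, and the hypothesis $v_f(F(x_0,y_0)) \geq 4r+3 > 2(2r+1)$ lets the one-variable Hensel lemma (in $x$, with $y=y_0$ fixed) lift $x_0$ to $x \in \FF_{2^k}(t)_{(f)}^+$ with $F(x,y_0)=0$. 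The case where $x_0$ is a unit is symmetric.

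The delicate case is configuration (B) with $\alpha \geq 1$: both partial derivatives of $F$ have valuation at least $3r+1$, so direct Hensel would demand $v_f(F) \geq 6r+3$, strictly stronger than our hypothesis. Here $\beta=r \geq 1$, so I write $y_0 = f^r y_1$ with $y_1$ a unit and divide the equation by $f^{2r}$, producing the auxiliary equation
\[
H(x,y_1) \;:=\; fx^2 + f^{r+1}xy_1 + (by_1^2 - c) \;=\; 0.
\]
Now $\partial H/\partial x = f^{r+1}y_1$ has valuation $r+1$, while $v_f(H(x_0,y_1)) = v_f(F(x_0,y_0)) - 2r \geq 2r+3 > 2(r+1)$, so Hensel applied to $H$ in the $x$-variable (with $y_1$ fixed) yields $x \in \FF_{2^k}(t)_{(f)}^+$ with $H(x,y_1)=0$, i.e., $F(x,f^ry_1)=0$. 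The main obstacle is precisely this rescaling step: extracting the factor $f^{2r}$ from $F$ is exactly what closes the gap between the available bound $4r+3$ and what naive multivariate Hensel would need.
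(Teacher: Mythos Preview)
Your overall strategy is sound and does succeed, but there is a labeling slip in the Hensel paragraph that you should fix. You write ``When $y_0$ is a unit (configuration (A), or the subcase $\alpha=0$ of (B))'', yet in both of those configurations it is $x_0$ that is the unit: in (A) you have $\alpha=0$ and $\beta\ge r+1\ge 1$, and in (B) with $\alpha=0$ you have $\beta=r$ which need not be $0$. The configurations in which $y_0$ is a unit are exactly those with $\beta=0$, i.e.\ (B) with $r=0$. Once the parenthetical is corrected, your three cases (namely: $y_0$ a unit $\Rightarrow$ Hensel in $x$; $x_0$ a unit $\Rightarrow$ Hensel in $y$; neither a unit $\Rightarrow$ (B) with $r\ge1$, $\alpha\ge1$, rescale) do exhaust all possibilities and each Hensel application is valid, since $v_f(F)\ge 4r+3>2(2r+1)$ in the first two cases and $v_f(H)\ge 2r+3>2(r+1)$ after rescaling in the third.

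Your route differs from the paper's. The paper rescales $y_0=f^rv_1$ uniformly at the outset (not only in the delicate case), then performs the explicit change of variables $x=u_0+f^{r+1}x_1$, $y=v_0+f^{2r+1}y_1$ to obtain an auxiliary equation whose gradient modulo $f$ is $(v_1,u_0)$; since $f^2\nmid c$ forces one of $u_0,v_1$ to be a unit, the \emph{standard} multivariate Hensel lemma (nonzero gradient mod $f$) applies directly, with no case split. Your approach instead invokes the \emph{strong} one-variable Hensel lemma (the version requiring only $v_f(F)>2v_f(F')$) and handles the configurations separately, reserving the rescaling for the case where naive Hensel would need precision $6r+3$ rather than the available $4r+3$. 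Both arguments hinge on the same rescaling idea; the paper's version is more uniform, yours is more elementary in the easy cases at the cost of the case analysis and the stronger form of Hensel.
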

\begin{proof}
$\Rightarrow:$ Suppose we have a solution $(u,v)\in \FF_{2^k}(t)_{(f)}$. Assume for contradiction that one of $u$ and $v$ is not in $\FF_{2^k}(t)_{(f)}^+$. Multiplying by the square of the common denominator $f^l$ of $u$ and $v$ we obtain $u_1=f^lu,v_1=f^lv\in \FF_{2^k}(t)_{(f)}^+$ such that $f^{2r+2l}\mid f^{2r+1}u_1^2+f^{2r+1}u_1v_1+bv_1^2$ but $f$ does not divide at least one of $u_1$ and $v_1$. Since $f\nmid b$ we obtain $f^{2r+1}\mid v_1^2$ whence $f^{r+1}\mid v_1$. So we deduce $f^{2r+2}\mid f^{2r+1}u_1v_1+bv_1^2$ and $f^{2r+2}\mid f^{2r+1}u_1^2$ contradicting to $f\nmid u_1$. Hence we may reduce the equality $f^{2r+1}u^2+f^{2r+1}uv+bv^2=cf^{2r}$ modulo $f^{4r+3}$.

$\Leftarrow:$ Assume we have $u_0,v_0\in \FF_{2^k}(t)_{(f)}^+$ such that
$$c_0f^{2r}:=f^{2r+1}u_0^2+f^{2r+1}u_0v_0+bv_0^2\equiv cf^{2r}\pmod{f^{4r+2}}\ .$$ Then we must have $f^r\mid v_0$ and put $v_0=f^rv_1$ so dividing by $f^{2r}$ we deduce
$$c_0=fu_0^2+f^{r+1}u_0v_1+bv_1^2\equiv c\pmod{f^{2r+2}} $$
Since $f^2\nmid c$ at least one of $u_0$ and $v_1$ is not divisible by $f$. Putting $c_1:=\frac{c-c_0}{f^{2r+2}}$, we look for the solution of the original equation in the form $x=u_0+f^{r+1}x_1$, and $y=v_0+f^{2r+1}y_1$. So we are reduced to solving the equation

\begin{align*}
f^{2r+1}(u_0+f^{r+1}x_1)^2+f^{2r+1}(u_0+f^{r+1}x_1)(f^rv_1+f^{2r+1}y_1)+\\+b(f^rv_1+f^{2r+1}y_1)^2
=f^{2r}(c_0+f^{2r+2}c_1)\ .
\end{align*}
Using the equation for $c_0$ and dividing by $f^{4r+2}$ we obtain the equivalent equation
\begin{align}
fx_1^2+x_1v_1+u_0y_1+f^{r+1}x_1y_1+by_1^2=c_1\ .\label{modf3}
\end{align}
Now note that Hensel's lemma applies to \eqref{modf3} since the gradient 
$$
\left(\frac{\partial}{\partial x_1}(fx_1^2+v_0x_1+u_0y_1+fx_1y_1+by_1^2-c_1),\right.$$
$$\left. \frac{\partial}{\partial y_1}(fx_1^2+v_0x_1+u_0y_1+fx_1y_1+by_1^2-c_1)\right)=$$
$$=(v_1+f^{r+1}y_1,u_0+f^{r+1}x_1)\equiv (v_1,u_0)\pmod{f}
 $$

is nonzero modulo $f$. Therefore $(u_0,v_1)$ lifts to a solution modulo $f^{4r+3}$ $\Leftrightarrow$ \eqref{modf3} has a solution modulo $f$ $\overset{\text{Hensel}}{\Leftrightarrow}$ \eqref{modf3} has a solution in $\FF_{2^k}(t)_{(f)}$ $\Leftrightarrow$ $(u_0,v_0)$ lifts to a solution of $f^{2r+1}x^2+f^{2r+1}xy+by^2=cf^{2r}$ in $\FF_{2^k}(t)_{(f)}$.
\end{proof}

\subsection{Finding nontrivial zeros}\label{zeroschar2}

Let $Q(x_1,x_2,x_3,x_4)=a_1x_1^2+a_1x_1x_2+a_1a_2x_2^2+a_3x_3^2+a_3x_3x_4+a_3a_4x_4^2$ where $a_i\mathbb{F}_{2^k}(t)$. In this section we provide an algorithm for deciding whether $Q$ admits a nontrivial zero and if so, returns a nontrivial zero $(x_1,x_2,x_3,x_4)$. The main idea is similar to the main algorithm of \cite{ivanyos2019explicit}. We replace $Q$ with a similar form $Q'$ and then decide whether $Q'$ has a nontrivial zero using the local-global principle. If so, then we look for a common $c\in\mathbb{F}_{2^k}(t)$ which is represented by both $a_1x_1^2+a_1x_1x_2+a_1a_2x_2^2$ and $a_3x_3^2+a_3x_3x_4+a_3a_4x_4^2$ and then solve the equations $a_1x_1^2+a_1x_1x_2+a_1a_2x_2^2=c$ and $a_3x_3^2+a_3x_3x_4+a_3a_4x_4^2=c$ separately. Solving these equations is equivalent to finding zero divisors in quaternion algebras over $\mathbb{F}_{2^k}(t)$. This is a special case of the main algorithm from \cite[Section 4]{ivanyos2018computing}. 

\begin{theorem}\label{finding}
Let $Q(x_1,x_2,x_3,x_4)=a_1x_1^2+a_1x_1x_2+a_1a_2x_2^2+a_3x_3^2+a_3x_3x_4+a_3a_4x_4^2$ where $a_i\in\mathbb{F}_{2^k}(t)$. Then there exists a polynomial-time algorithm which decides whether $Q$ is isotropic and if so it finds a nontrivial zero of $Q$.
\end{theorem}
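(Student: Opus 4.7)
Following the strategy of \cite{ivanyos2019explicit}, I write $Q$ as the orthogonal sum of the two binary forms $B_1(x_1,x_2):=a_1(x_1^2+x_1x_2+a_2x_2^2)$ and $B_2(x_3,x_4):=a_3(x_3^2+x_3x_4+a_4x_4^2)$, and reduce the problem to exhibiting a common non-zero value $c\in\mathbb{F}_{2^k}(t)$ represented by both. Once such a $c$ is found, each equation $B_i=c$ is a Hilbert equation in the sense of Lemma \ref{hilberteq}, so its resolution is equivalent to splitting a quaternion algebra over $\mathbb{F}_{2^k}(t)$, which is done in polynomial time by \cite[Section 4]{ivanyos2018computing}. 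Thus the theorem reduces to deciding whether such a $c$ exists (equivalently, by Theorem \ref{HM}, whether $Q$ is isotropic) and, when it does, producing one explicitly.

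The algorithm begins by normalising the data: apply Lemma \ref{minimalform} to replace $a_2,a_4$ by minimal representatives, then rescale $x_1,x_3$ by suitable polynomials so that $a_1,a_3$ are square-free elements of $\mathbb{F}_{2^k}[t]$, and absorb any common irreducible factor to arrange $\gcd(a_1,a_3)=1$. Let $\Sigma$ be the finite set of places of $\mathbb{F}_{2^k}(t)$ consisting of $\infty$, the monic irreducible divisors of $a_1a_3$, and the poles of $a_2$ and $a_4$. Outside $\Sigma$ both $a_2,a_4$ are $f$-adic units and $v_f(a_1a_3)=0$, so Lemma \ref{2valtmohatosag} makes local isotropy of $Q$ automatic. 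At each $f\in\Sigma$ I invoke Lemma \ref{4valtmohatosag} or Lemma \ref{3valtsolv} (together with Lemma \ref{localrecip}) to test local isotropy and, when it holds, to extract a residue condition modulo a small power of $f$ that characterises the $c$'s jointly represented by $B_1$ and $B_2$ locally at $f$. If $Q$ fails to be locally isotropic at some $f\in\Sigma$, the algorithm returns ``anisotropic''; by Theorem \ref{HM} this is correct.

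Otherwise the Chinese remainder theorem combines the local data from $\Sigma$ into a single congruence $c\equiv c_0\pmod{m}$, with $m$ a polynomial supported on $\Sigma$, and it remains to find a concrete $c$ satisfying this congruence. The cleanest choice is $c=u\pi$, where $u$ is a fixed correction unit and $\pi\notin\Sigma$ is a monic irreducible polynomial satisfying simultaneously $\pi\equiv c_0/u\pmod{m}$ and $[a_2,\pi)=[a_4,\pi)=0$; the last two Artin--Schreier conditions guarantee, via Lemma \ref{2valtmohatosag}, that $c$ is a local norm of $K_{a_2}$ and of $K_{a_4}$ at the new prime $\pi$, so by Theorem \ref{Product formula} both ternary forms $B_i(x,y)+cz^2$ become globally isotropic. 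The main technical obstacle is the search for such a $\pi$: translating the Artin--Schreier conditions $[a_i,\pi)=0$ into further congruence conditions on $\pi$ modulo the conductors of the extensions $K_{a_2}/\mathbb{F}_{2^k}(t)$ and $K_{a_4}/\mathbb{F}_{2^k}(t)$, one obtains a single CRT-compatible residue system whose density of irreducible solutions of polynomially bounded degree is controlled by Lemma \ref{number}. An exhaustive search in that degree range therefore terminates in polynomial time, and feeding the resulting $c$ into the algorithm of \cite{ivanyos2018computing} once for each $B_i$ finally produces an explicit non-trivial zero $(x_1,x_2,x_3,x_4)$ of $Q$.
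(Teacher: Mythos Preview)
Your overall strategy—split $Q$ into the two binary norm forms $B_1,B_2$, seek a common represented value $c$, then solve each $B_i=c$ via the quaternion-splitting algorithm of \cite{ivanyos2018computing}—is exactly the paper's, as are the normalisation steps and the choice of the bad set $\Sigma$ (the paper's $S$).

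The substantive divergence is your treatment of the new prime $\pi$ in $c=u\pi$. The paper imposes \emph{no} condition on $[a_2,h)$ or $[a_4,h)$: once $c=f_1\cdots f_mh$ meets the local congruences at $S$, each ternary form $x^2+xy+a_2y^2-(c/a_1)z^2$ (and its $a_4,a_3$ analogue) is isotropic at every place except possibly $h$, and Theorem~\ref{Product formula} then forces isotropy at $h$ for free. Your plan instead enforces $[a_2,\pi)=[a_4,\pi)=0$ directly by ``translating the Artin--Schreier conditions into further congruence conditions on $\pi$ modulo the conductors''. That is where the argument is incomplete: turning $[a,\pi)=0$ into an \emph{explicit} congruence on $\pi$ requires an explicit reciprocity law for the Artin--Schreier symbol, not merely the abstract existence of a conductor, and you must also argue consistency with the $\Sigma$-congruences—the conductors of $K_{a_2},K_{a_4}$ are supported precisely on the poles of $a_2,a_4$, which already lie in $\Sigma$, so CRT does not apply na\"\i vely. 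In fact the product-formula argument shows that these two extra conditions hold automatically for \emph{any} $\pi$ satisfying the $\Sigma$-congruences, so the search you propose is both harder to justify and unnecessary. Finally, your closing appeal to Theorem~\ref{Product formula} to pass from ``locally isotropic everywhere'' to ``globally isotropic'' should instead cite Theorem~\ref{HM}; the product formula is precisely what lets you \emph{omit} the check at $\pi$, not what glues the local solutions together.
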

\begin{proof}
 By rescaling and dividing by common factors we can assume that $a_1,a_3\in\mathbb{F}_{2^k}[t]$ and $gcd(a_1,a_3)=1$. We look for a common $c\in\mathbb{F}_{2^k}[t]$ which is represented by both $a_1x_1^2+a_1x_1x_2+a_1a_2x_2^2$ and $a_3x_3^2+a_3x_3x_4+a_3a_4x_4^2$. Note that $c$ is represented by both these forms if and only if it is represented by both forms locally at each place $f$. By Lemma \ref{minimalform} we may assume that both $a_1x_1^2+a_1x_1x_2+a_1a_2x_2^2$ and $a_3x_3^2+a_3x_3x_4+a_3a_4x_4^2$ are minimal (in the sense of Definition \ref{minminmin}). Denote by $S$ the set of places where at least one of the following holds: 
\begin{enumerate}
\item $a_2$ has a pole at $f$;
\item $a_4$ has a pole at $f$;
\item $v_f(a_1a_3)$ is odd.
\end{enumerate}
We look for $c$ in the form $c=f_1f_2\cdots f_m h$ where $f_1,\dots,f_m\in S$ are monic irreducible polynomials and $h$ is irreducible. If $f\notin S$, then $v_f(a_1)$ and $v_f(a_3)$ have the same parity. Since $a_1$ and $a_3$ are coprime polynomials, their valuations must actually be even. If $v_f(c)=0$ the forms represent $c$ locally at $f$ by Lemma \ref{2valtmohatosag}(1). On the other hand, if $f\in S$ then we distinguish two cases.

First assume that neither $a_2$ nor $a_4$ has a pole at $f$ (whence $v_f(a_1a_3)$ is odd). Then whether or not a square-free polynomial $c$ is represented by the form $a_1x_1^2+a_1x_1x_2+a_1a_2x_2^2$ (resp.\ $a_3x_3^2+a_3x_3x_4+a_3a_4x_4^2$) depends only on the class of $c$ modulo $f^2$. So we may decide by checking all the residue classes modulo $f^2$ whether there is a common value $c$ of the two forms. If there is no common value then we are done (the $4$-variable form is not isotropic). By Lemma \ref{4valtmohatosag} this happens if and only if $[a_2,f)=[a_4,f)=1$. We put $f$ among $f_1,\dots,f_m$ if all the common square-free values of the two forms are divisible by $f$. Either way, there possibly appears a condition on $c$ modulo $f^2$ (which we shall encode in the choice of $h$).

Now assume that either $a_2$ or $a_4$ has a pole at $f$. We use Lemma \ref{localrecip} in order to decide whether there is a common value of the forms $a_1x_1^2+a_1x_1x_2+a_1a_2x_2^2$ and $a_3x_3^2+a_3x_3x_4+a_3a_4x_4^2$ locally at $f$: If $a_2+a_4$ has a pole of odd degree at $f$ then $x^2+x+a_2+a_4$ does not split in $\FF_{2^k}(t)_{(f)}$ hence there exist common values. We reduce the poles at $f$ in $a_2+a_4$ of even degree by adding elements of the form $\frac{d^2}{t^{2s}}+\frac{d}{t^s}$ ($d\in \FF_{2^k}[t]$). If all the poles are removed then we check whether $x^2+x+a_2+a_4$ splits by computing the symbol $[a_2+a_4,f)$ as in the previous case. Finally, for finding common values we use random values $c$ modulo $f^{4r+3}$ by Lemma \ref{3valtsolv} where $2r+1:=\max(-v_f(a_2),-v_f(a_4))$ is the bigger order of the pole at $f$ of $a_2$ and $a_4$. Again, if $v_f(c)$ is odd then we put $f$ into the finite set $\{f_1,\dots,f_m\}$.


Finally, if $f=\infty\in S$ then the congruence condition on $c$ involves a condition on the parity of the degree of $c$, as well as a condition modulo a power of $t$. Even if $\infty\notin S$ then the condition $v_\infty(c)=0$ means the degree of $c$ must be even.

Now if none of the above congruence conditions were contradictory then we deduce that the $4$-variable form is isotropic by Theorem \ref{HM}. So we proceed with finding a nontrivial zero looking for $c=f_1\cdots f_m h$ where the monic irreducible polynomials $f_1,\dots,f_m\in S$ are determined above and we choose $h$ irreducible satisfying all the above congruence conditions (including possibly a condition at $\infty$ if it belongs to $S$). This is possible by Lemma \ref{number}. By construction, $c$ is a common value of $a_1x_1^2+a_1x_1x_2+a_1a_2x_2^2$ and $a_3x_3^2+a_3x_3x_4+a_3a_4x_4^2$ locally at all places in $S$. Further, if $g\neq h$ is a (finite or infinite) place not in $S$ then $c$ is also a common value locally at $g$, so the only exception could be at $h$. However, by Hilbert's reciprocity law (Theorem \ref{Product formula}) $c$ is also a common value locally at $h$. 
\begin{algorithm}
	\KwIn{$a_1,a_3 \in \mathbb{F}_{2^k}[t]$ and $a_2,a_4 \in \mathbb{F}_{2^k}(t)^\times$ such that $a_1$ and $a_3$ are coprime, and every pole of $a_2$ (resp. $a_4$) has an odd order.}
	\KwOut{$h \in \mathbb{F}_{2^k}[t]$ such that $h$ is represented by both binary quadratic forms $Q_1(x,y) = a_1(x^2+xy+a_2y^2)$ and $Q_2(x,y) = a_3(x^2+xy+a_4y^2)$, or $\bot$ if no such $h$ exists.}
	$Conds \leftarrow [\ ]$\;
	$c \leftarrow 1$\;
	\For{$g \in \mathrm{Poles}(a_2) \cup \mathrm{Poles}(a_4)$}{
		\tcc{Apply lemmas \ref{3valtsolv} and \ref{2valtmohatosag} to find $h_g$}
		\If{$\exists h_g \in \mathbb{F}_{2^k}[t], N \in \mathbb{N} \mid \nu_g(h_g) \leq 1$ and  $\forall h \in \mathbb{F}_{2^k}(t)_{(g)}, h$ is represented by $Q_1$ and $Q_2$ if\\ $h = h_g \mod g^N$}{
			$\mathrm{Append}(\mathrm{Conditions},(h_g,g^N))$\;
			$c \leftarrow c \times g^{\nu_g(h_g) \mod 2}$\;
		}
		\Else{
			\Return{$\bot$}
		}
	}
	\For{$g \in \mathbb{F}_{2^k}(t)$ irreducible, such that $\nu_g(a_1 a_3)$ is odd and $g \notin \mathrm{Poles}(a_2) \cup \mathrm{Poles}(a_4)$}{
		\tcc{Apply lemmas \ref{2valtmohatosag} and \ref{4valtmohatosag} to find $\nu_g$}
		\If{$\exists \nu \in \{0,1\} \mid \forall h \in \mathbb{F}_{2^k}(t)_{(g)}, h$ is represented by $Q_1$ and $Q_2$ if $\nu_g(h) = \nu \mod 2$}{
			$c \leftarrow c \times g^\nu$\;
		}
		\Else{
			\Return{$\bot$}
		}
	}
	$i \leftarrow \mathbb{F}_{2^k}$-automorphism of $\mathbb{F}_{2^k}(t)$ sending $t$ to $1/X$\;
	$\nu_\infty \leftarrow \max(\deg(a_1),\deg(a_3))$\;
	\tcc{Use remark \ref{commonvalueinf} to find $h_\infty$ and  $N_\infty$}
	\If{$\exists h_\infty \in \mathbb{F}_{2^k}[t],N_\infty \in \mathbb{N} \mid \forall h \in \mathbb{F}_{2^k}(t)_{(t)}, h$ is represented by $Q_1$ and $Q_2$ if $\exists N \in \mathbb{N}, t^{2N + \nu_\infty}i(h) = h_\infty \mod t^{N_\infty}$}{
		\Return{$cg: g \in \mathbb{F}_{2^k}[X]$ is irreducible, $\deg{(cg)} = \nu_\infty + \deg{(h_\infty)} \mod 2$, $\forall (h,f) \in \mathrm{Conditions}, cg = h \mod f$ and $\exists n \in \mathbb{Z} \mid t^{2n+\nu_\infty} i(cg) = h_\infty \mod t^{N_\infty}$}

	}
	\Else{
		\Return{$\bot$}
	}
	\caption{SplitQuaternaryForm} \label{algofinding}
\end{algorithm}
\end{proof}
\begin{remark}\label{commonvalueinf}
	The case $f = \infty$ can be treated like the case $f = t$ after applying the automorphism fixing $\mathbb{F}_{2^k}$ and sending $t$ to $\frac{1}{t}$. It is necessary to multiply the equation by a power of $t$ to normalize it, and to check that if the new $a_2$ or $a_4$ have a pole at $t$, it has an odd order (otherwise one may apply again the algorithm from lemma \ref{minimalform}).
	When checking a candidate polynomial for this condition, one should be careful to normalize it with a power of $t$ of the same parity as the one used to derive the condition. The condition on the degree of $h$ then is the one that allows $h$ to have the prescribed valuation at the place $t$ after applying the automorphism and normalizing.
\end{remark}
\subsubsection{An example}\label{example}

We give a short example of how the algorithm works. Let $K=\mathbb{F}_2(t)$ and consider the form $a_1x_1^2+a_1x_1x_2+a_1a_2x_2^2+a_3x_3^2+a_3x_3x_4+a_3a_4x_4^2$ where $a_1=t^2+t+1,~a_2=t,~a_3=1,~a_4=1$. We have to look at $f$-adic solvability for $f=t^2+t+1$ and $f=\infty$ as these are the places for which the $f$-adic valuation of $a_1a_3$ is odd or $a_2$ has pole ($a_4$ is regular everywhere). 

One has $[a_2,t^2+t+1)=1$ and $[a_4,t^2+t+1)=0$ which implies that the $(t^2+t+1)$-adic valuation of a common value $c$ of $(t^2+t+1)(x_1^2+x_1x_2+tx_2^2)$ and $x_3^2+x_3x_4+x_4^2$ must be odd. Therefore we need to look for $c$ in form of $c=(t^2+t+1)h$ where $h$ is an irreducible polynomial over $\FF_2$. Further, $h=x_1^2+x_1x_2+tx_2^2$ admits a nonzero solution modulo $(t^2+t+1)$ if $h\equiv t\pmod{t^2+t+1}$. 
On the other hand, $v_\infty((t^2+t+1)h)=\deg((t^2+t+1)h)$ must be even in order for $x_3^2+x_3x_4+x_4^2=(t^2+t+1)h$ to be solvable at $\infty$ since the extension by a root of $X^2+X+1$ is unramified at $\infty$. Putting $z=1/t$ this boils down to the solvability of $zx_1^2+zx_1x_2+x_2^2=zh$ in $\FF_2[[z]]$ which is equivalent to $h\equiv 1\pmod{z^2}$ by Lemma \ref{3valtsolv}. After rescaling $h$ so that it is a polynomial in $t$ our condition is that the coefficient of $t^{2n-1}$ is zero in $h$ where $\deg(h)=2n$. 

Finally, a little computation shows that $h(t)=t^6+t+1$ satisfies 
\begin{enumerate}
\item $h\equiv t\pmod{t^2+t+1}$
\item $\deg(h)=2n$ is even
\item the coefficient of $t^{2n-1}$ in $h$ is zero
\end{enumerate}
So we are reduced to finding a solution to the following two equations globally:
\begin{enumerate}
    \item $x_1^2+x_1x_2+tx_2^2=(t^2+t+1)(t^6+t+1)$
    \item $x_3^2+x_3x_4+x_4^2=t^6+t+1$
\end{enumerate}
Running the known algorithms for binary forms we find $x_1=t^3+t^2+1$, $x_2=t$, $x_3=t^4+1$, $x_4=t^3$.

\subsection{Applications}\label{zerodivchar2}

In this subsection we give two applications of our results and methods.  One is to finding separable quadratic extensions $L$ of $\mathbb{F}_{2^k}(t)$ inside a quaternion algebra that is split by $L$. The other is constructing quaternion algebras over $\mathbb{F}_{2^k}(t)$ with prescribed Hasse invariants. 

\begin{theorem}\label{2quat}
Let $L$ be a separable quadratic extension of $\mathbb{F}_{2^k}(t)$ and let $B$ be a quaternion algebra over $\mathbb{F}_{2^k}(t)$ which is split by $L$. Then there exists a polynomial-time algorithm which finds a subfield of $B$ isomorphic to $L$. 
\end{theorem}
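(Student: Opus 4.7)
The plan is to reduce the problem of finding a copy of $L$ inside $B$ to an isotropic-vector search for an explicit quaternary form, and then to invoke Theorem~\ref{finding}.

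First I would put the data in a standard form. By Artin--Schreier theory, write $L = F(\alpha)$ with $\alpha^2+\alpha = d$ for some $d \in F$, where $F = \mathbb{F}_{2^k}(t)$. By Lemma~\ref{kvatalg}, fix a presentation $B = \left[\frac{a,b}{F}\right)$ with basis $1,i,j,k$ satisfying $i^2+i = a$, $j^2 = b$, $k = ij$. An $F$-embedding $L\hookrightarrow B$ is the same as producing $y\in B\setminus F$ with $y^2+y = d$.

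The core step is a direct calculation in $B$. For $y = y_0 + y_1 i + y_2 j + y_3 k$, one works out (using $k^2 = ab$, $ij+ji = j$, $ik+ki = k$, $jk+kj = b$) that
\begin{equation*}
y^2+y \;=\; \bigl(y_0^2+y_0+y_1^2 a + b y_2^2 + b y_2 y_3 + ab y_3^2\bigr) + y_1(y_1+1)\,i + y_2(y_1+1)\,j + y_3(y_1+1)\,k.
\end{equation*}
Requiring $y^2+y \in F$ while $y\notin F$ forces $y_1 = 1$, and the scalar part reduces to $y_0^2+y_0 + b y_2^2 + b y_2 y_3 + ab y_3^2 = d+a$. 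Projectivizing via $y_0 = x_1/x_2$, $y_2 = x_3/x_2$, $y_3 = x_4/x_2$ recasts this as the isotropy problem, with $x_2 \neq 0$, for the quaternary form
\begin{equation*}
Q(x_1,x_2,x_3,x_4) \;=\; x_1^2 + x_1 x_2 + (a+d) x_2^2 + b x_3^2 + b x_3 x_4 + ab x_4^2,
\end{equation*}
which is already in the canonical shape of Corollary~\ref{diagalakcor}. Since $L$ splits $B$ and for quaternion algebras this is classically equivalent to $L\hookrightarrow B$, the scalar equation admits a solution, so $Q$ is isotropic; Theorem~\ref{finding} then returns a nonzero vector in polynomial time.

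The single subtlety I would be careful with is excluding the spurious case $x_2 = 0$ in the vector returned by the algorithm. Such a vector would give $x_1^2 = b\cdot N_{F(i)/F}(x_3+x_4 i)$ with $(x_3,x_4)\neq 0$, so $b$ would be a norm from $F(i)/F$ (any square in $F$ is automatically a norm, since $N(c) = c^2$ for $c\in F$), which by Lemma~\ref{hilberteq} already forces $B$ to be split. This easy branch I would dispatch separately: compute an explicit isomorphism $B \cong M_2(F)$ using the zero-divisor algorithm of \cite{ivanyos2018computing}, and embed $L$ into $M_2(F)$ via its regular representation on itself. The main obstacle is really just sealing this case analysis and verifying the computation of $y^2+y$; the hard quadratic-form side is entirely delegated to Theorem~\ref{finding}.
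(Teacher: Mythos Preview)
Your proposal is correct and follows essentially the same route as the paper: both reduce to finding $y\in B$ with $y^2+y=d$, compute $y^2+y$ in the quaternion basis, observe that $y_1=1$ is forced, and homogenize the resulting scalar equation into the quaternary form $x_1^2+x_1x_2+(a+d)x_2^2+bx_3^2+bx_3x_4+abx_4^2$, which is then handed to Theorem~\ref{finding}. The paper disposes of the split case up front (so that in the division case a nontrivial zero automatically has $\mu_2\neq 0$, since $x_3^2+x_3x_4+ax_4^2$ is then anisotropic), whereas you run the isotropy algorithm first and only branch on $x_2=0$ afterwards; this is a harmless organizational difference, and your explicit justification of the $x_2=0$ branch is in fact a detail the paper leaves implicit.
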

\begin{proof}
If $B$ is split, then one can find a explicit isomorphism between $B$ and $M_2(\mathbb{F}_{2^k}(t))$ in polynomial time using the main algorithm from \cite[Section 4]{ivanyos2018computing} (the algorithm also decides whether $B$ is split or not). From such an isomorphism a suitable maximal subfield can be constructed easily (by constructing a matrix whose minimal polynomial corresponds to a defining polynomial of $L$).

Now suppose that $B$ is a division algebra. In that case $B$ contains a maximal subfield isomorphic to $L$ \cite[Lemma 6.4.12]{voight2018quaternion}. Let $L=\mathbb{F}_{2^k}(t)(s)$ where $s^2+s=c$ and $c\in\mathbb{F}_{2^k}(t)$. If we find an element $u\in B$ such that $u^2+u=c$, then $u+s$ is a zero divisor as $u$ is not in the center. Suppose that $B$ has the following quaternion basis:
$$i^2+i=a$$
$$j^2=b$$
$$ij=j(i+1)$$
Let us look for $u$ in the form of $u=\lambda_1+\lambda_2 i+\lambda_3 j+\lambda_4 ij$, where $\lambda_i\in \FF_{2^k}(t)$.
\begin{align*}
u^2+u=\lambda_1^2+\lambda_2^2 a+\lambda_3^2 b+\lambda_4^2 ab+\lambda_3\lambda_4 b+\\
+\lambda_1+i(\lambda_2^2+\lambda_2)+j(\lambda_2\lambda_3+\lambda_3)+ij(\lambda_2\lambda_4+\lambda_4)
\end{align*}
For this to be in $\FF_{2^k}[t]$, $\lambda_2=1$ must hold. Now we investigate if the following equation has a non-trivial solution:
\begin{equation}\label{eq:2}
    \lambda_1^2+\lambda_3^2 b+\lambda_4^2 ab+\lambda_3\lambda_4 b+\lambda_1+a+c=0
\end{equation}
Let $\mu_2$ equal to the product of the denominators of all $\lambda_i$, $\mu_1:=\lambda_1\mu_2$, let us introduce new variables $\mu_3:=\lambda_3\mu_2$ and $\mu_4:=\lambda_4\mu_2$. Then multiplying (\ref{eq:2}) by $\mu_2^2$ gives
\begin{equation}\label{eq:3}
    \mu_1^2+\mu_1\mu_2+(a+c)\mu_2^2+b\mu_3^2 +b\mu_3\mu_4+ab\mu_4^2 =0
\end{equation}
where $\mu_1,\mu_2,\mu_3,\mu_4 \in \mathbb{F}_{2^k}[t]$. Now we find a solution to the above equation using the algorithm from Theorem \ref{finding} (\cite[Lemma 6.4.12]{voight2018quaternion} guarantees the existence of a solution) which returns $u$. The algorithm of Theorem \ref{finding} runs in polynomial time which implies the statement of the theorem.  
\end{proof}
\begin{remark}
The main motivation behind studying this algorithm is that in can be used to find zero divisors in split quaternion algebras over $L$. Namely, one frst constructs a subalgebra $B$ of the large algebra $A$ that is a quaternion algebra over $\mathbb{F}_{2^k}(t)$. If this algebra is not split, then it contains a subfield isomorphic to $L$ which is generated by some quaternion element $x$. Let $L=\mathbb{F}_{2^k}(t)(s)$. Then $x+s$ will be a zero divisor.
\end{remark}
The next proposition shows how to construct a quaternion division algebra with given Hasse invariants. 
\begin{proposition}\label{hasse}
Let $v_1,\dots,v_l$ be places of $\mathbb{F}_{2^k}(t)$ such that $l$ is even. Then there exists a polynomial-time algorithm which constructs a quaternion algebra over $\mathbb{F}_{2^k}(t)$ which is ramified exactly at $v_1,\dots,v_l$. 
\end{proposition}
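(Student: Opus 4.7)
The plan is to construct parameters $a,b \in \mathbb{F}_{2^k}(t)$ so that the quaternion algebra $B=\left[\frac{a,b}{\mathbb{F}_{2^k}(t)}\right)$ (in the sense of Lemma \ref{kvatalg}) ramifies at exactly $v_1,\dots,v_l$. By Lemma \ref{hilberteq} and Theorem \ref{LCFTneeded}, $B$ is ramified at a place $v$ iff $b$ is a non-norm from the local extension $K_{a,v}$ to $\mathbb{F}_{2^k}(t)_{(v)}$, where $K_a := \mathbb{F}_{2^k}(t)[X]/(X^2+X+a)$. So the construction proceeds in two stages: first pin down $a$ to fix the ramification of $K_a$, then pin down $b$ to realize the correct non-norm behavior at each place.

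First I would fix $a$. By Artin--Schreier theory and the reduction algorithm of Lemma \ref{minimalform}, $K_a$ ramifies at a place $f$ iff the minimal form of $a$ has a pole of odd order at $f$. Using the automorphism $t \mapsto 1/t$ (cf.\ Remark \ref{commonvalueinf}) to reduce to the case where all $v_i$ are finite, let $\pi_i$ be the monic irreducible polynomial corresponding to $v_i$ and put $a := \sum_{i=1}^l 1/\pi_i$. Then $a$ has a simple pole at each $v_i$ and no other pole; these odd-order poles cannot be removed by Artin--Schreier equivalence since $c^2+c$ only admits even-order poles. Hence $K_a/\mathbb{F}_{2^k}(t)$ is a genuine separable quadratic extension ramified at exactly $v_1,\dots,v_l$ and unramified everywhere else.

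Next I would look for $b$ of the shape $b := \pi_1\cdots\pi_l\cdot h$ with $h \in \mathbb{F}_{2^k}[t]$ monic irreducible, to be chosen. At any place $v \notin \{v_i\}$ where $K_a$ is inert (unramified non-split), Theorem \ref{LCFTneeded} says that $b$ is a norm iff $v(b)$ is even, which is automatic at each finite $v \notin \{v_i,h\}$ since $v(b)=0$. At the place of $h$ we have $v_h(b)=1$, so I additionally require that $K_a$ splits at $h$, i.e.\ $[a,h)=0$. At $\infty$ we get a parity condition on $\deg(h)$ when $K_a$ is inert there. At each $v_i$ the non-norm requirement becomes an explicit congruence on $b$ modulo some $\pi_i^{N_i}$, which translates to a congruence on $h$ modulo $\pi_i^{N_i}$ since $\pi_1\cdots\pi_l$ is fixed. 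The finite-place congruence conditions combine via the Chinese Remainder Theorem, and Lemma \ref{number} then guarantees --- and, by enumerating residue classes, produces in polynomial time --- an irreducible $h$ of polynomially bounded degree satisfying all of them together with the splitting condition $[a,h)=0$ and the parity condition at $\infty$. By construction $B$ is split outside $\{v_1,\dots,v_l\}$ and ramified at each $v_i$; the hypothesis that $l$ is even is consistent with Hilbert's reciprocity (Theorem \ref{Product formula}).

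The main obstacle I foresee is making the non-norm coset at each ramified $v_i$ fully explicit in characteristic two: the extension $K_{a,v_i}/\mathbb{F}_{2^k}(t)_{(v_i)}$ is wildly ramified of degree two, so the norm group is not described by a simple valuation parity plus a residue symbol as in odd characteristic. However, since the norm group is open of index two in $\mathbb{F}_{2^k}(t)_{(v_i)}^\times$ (Theorem \ref{LCFTneeded}), it contains $1+\pi_i^{N_i}\mathcal{O}_{(v_i)}$ for some $N_i$ bounded polynomially in $\deg \pi_i$ and in the $\pi_i$-adic data of $a$, so the defining congruence modulo $\pi_i^{N_i}$ can be extracted by a finite local computation (for example, by testing enough unit classes against the norm form $x^2+xy+ay^2$). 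Once these congruences are in hand, the global existence and polynomial-time findability of $h$ via Lemma \ref{number} is routine.
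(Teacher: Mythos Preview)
Your approach differs substantially from the paper's and, while salvageable, contains a gap in the treatment of the auxiliary prime $h$.

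\textbf{Comparison with the paper.} The paper avoids wild ramification altogether. It takes one parameter to be the product $f_1\cdots f_m$ of the monic irreducibles underlying the finite $v_i$, and the other to be a single irreducible polynomial $b$ with $[b,f_i)=1$ for each $i$; since $[b,f_i)$ depends only on $b\bmod f_i$, this is a pure congruence condition and is found directly by CRT plus Lemma \ref{number}. At each $f_i$ the relevant Artin--Schreier extension (the one attached to $b$) is then \emph{unramified} and inert, so the non-norm criterion is simply ``odd valuation'' (Lemma \ref{2valtmohatosag}(2)), which $f_1\cdots f_m$ visibly satisfies. Splitting at the place $b$ and at all other finite places is immediate, and $\infty$ is handled by Hilbert reciprocity. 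Your construction instead forces $K_a$ to be \emph{wildly} ramified at each $v_i$, so you must extract the non-norm coset modulo a power of $\pi_i$; Lemma \ref{3valtsolv} does exactly this (with $r=0$ since your poles are simple, giving modulus $\pi_i^3$), so your ``main obstacle'' is genuine but already handled by the paper's local lemmas. The paper's route is cleaner because it never enters this regime.

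\textbf{The gap.} You list $[a,h)=0$ among the requirements on $h$ and then claim Lemma \ref{number} produces an irreducible $h$ meeting it alongside the congruence conditions. But $[a,h)=0$ is a splitting condition for the prime $h$ in the global extension $K_a$, not a congruence on $h$ modulo any fixed modulus; Lemma \ref{number} controls only the latter. The fix is the same device used both in the paper's proof of this proposition and in Theorem \ref{finding}: do \emph{not} impose $[a,h)=0$. Once your congruences at the $v_i$ make $B$ ramified at exactly $v_1,\dots,v_l$ among all places other than $h$ (and $\infty$ is automatic in your setup, since $v_\infty(a)\geq 1$ gives $[a,\infty)=0$, so $K_a$ splits there), the evenness of $l$ together with Hilbert reciprocity (Theorem \ref{Product formula}) forces $B$ to be split at $h$ as well. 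Your closing remark about reciprocity is therefore not a mere consistency check but the actual step that closes the argument.
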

\begin{proof}
Let $f_1,\dots f_m$ be the finite places amongst the $v_i$. First we find a monic irreducible polynomial in $b\in\mathbb{F}_{2^k}[t]$ such that $[b,f_i)=1$. This can be accomplished in the following way. One finds quadratic non-square $r_i$ modulo every $f_i$ ($\mathbb{F}_q[t]/(f_i)$ is finite field of cardinality $2^{\deg(f_i)k}$) and then obtains a residue class $r$ modulo $f_1\cdots f_m$ such that $r\equiv r_i\pmod{f_i}$ by Chinese remaindering. Then using Lemma \ref{number} one finds an irreducible polynomial of suitably large degree which is congruent to $r$ mod $f_1\cdots f_m$ by choosing random elements from the residue class until an irreducible is found. 

Let $a=f_1\cdots f_m$. We show that the quaternion algebra $A=[a,b)$ ramifies at every $f_i$. The algebra $A$ ramifies at $f_i$ if and only if the quadratic form $ax^2+axy+aby^2+z^2$ has a nontrivial zero in $\mathbb{F}_{2^k}(t)_{(f_i)}$. Since the form is homogeneous, it is enough to show that it does not admit an integral zero. The variable $z$ must be divisible by $f_i$ since $a$ is divisible by $f_i$. Now setting $z=f_iz'$ and dividing by $f_i$ we get the following equation:
$$a/f_ix^2+a/f_ixy+a/f_iby^2+f_iz'^2=0 $$
Suppose this equation has a nontrivial solution $(x_0,y_0,z_0)$. One may assume that $f_i$ does not divide $x_0,y_0$ and $z_0$ simultaneously. Then the following congruence condition holds:
$$a/f_ix_0^2+a/f_ix_0y_0+a/f_iby_0^2\equiv 0 \pmod{f_i} $$
Since $a/f_i$ is coprime to $f_i$ one can divide the congruence by $a/f_i$. If $y_0$ is not divisible by $f_i$, then $b$ is a quadratic residue mod $f_i$ which is a contradiction. If $y_0$ is divisible by $f_i$, then so is $x_0$. However, if $x_0$ and $y_0$ are both divisible by $f_i$, then $z_0$ is not divisible by $f_i$ and then $a/f_ix_0^2+a/f_ix_0y_0+a/f_iby_0^2+f_iz_0^2$ is not divisible by $f_i^2$ which is a contradiction. 

The algebra $A$ is split at $b$ since the equation $ax^2+axy+aby^2+z^2=0$ has a solution modulo $b$ (setting $z=0$ and $x=y=1$) which can be lifted by Hensel's lemma. $A$ is clearly split at all the other finite places and has the required splitting condition at $\infty$ by Hilbert reciprocity (Theorem \ref{Product formula}). 
\end{proof}
\begin{corollary}\label{divalg}
Let $D$ be a quaternion division algebra over $\mathbb{F}_{2^k}(t)$ and let $A$ be an algebra isomorphic to $M_2(D)$ given by structure constants. Then one can find a zero divisor in $A$ in polynomial time. 
\end{corollary}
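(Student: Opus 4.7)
The plan is to reduce finding a zero divisor in $A \cong M_2(D)$ to splitting an explicit matrix algebra of degree $8$ over $k := \mathbb{F}_{2^k}(t)$, by first constructing an explicit copy of $D$ via Proposition \ref{hasse} and then applying the algorithm of \cite{ivanyos2018computing}.

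First, I would compute the ramified places of $A$. Since $A$ and $D$ are Brauer-equivalent, they ramify at the same finite set of places of $k$. This set can be identified by computing a maximal order of $A$ and examining its reductions at each finite prime (and at infinity), and locally testing at each candidate place whether $A_v$ is split. Applying Proposition \ref{hasse}, I would then construct a quaternion algebra $D'$ over $k$, given explicitly by structure constants, with the same ramification as $D$. By the local-global classification of quaternion algebras, $D' \cong D$, so $A \cong M_2(D')$ as abstract $k$-algebras.

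Next, I would form the tensor product $\widetilde{A} := A \otimes_k (D')^{\operatorname{op}}$, computable from the structure constants of $A$ and $D'$. Since $(D')^{\operatorname{op}}$ carries the opposite Brauer class to $D$, we obtain $\widetilde{A} \cong M_2(D \otimes_k D^{\operatorname{op}}) \cong M_2(M_4(k)) \cong M_8(k)$. I would then apply the algorithm of \cite{ivanyos2018computing} to compute an explicit isomorphism $\varphi : \widetilde{A} \xrightarrow{\sim} \operatorname{End}_k(V)$, where $V$ is a $k$-vector space of dimension $8$.

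With $\varphi$ in hand, the commuting actions of $A$ and $(D')^{\operatorname{op}}$ on $V$ endow $V$ with the structure of a right $D'$-module of $D'$-dimension $2$, and $A$ acts as $D'$-module endomorphisms, yielding a concrete isomorphism $A \cong \operatorname{End}_{D'}(V) \cong M_2(D')$ extractable by routine linear algebra on the actions. A zero divisor in $A$ is then obtained by pulling back any obvious zero divisor from $M_2(D')$, such as a matrix with a zero column. The main obstacle is producing the isomorphism $\varphi$, but this is precisely the problem solved in polynomial time by the main result of \cite{ivanyos2018computing}, applied to the matrix algebra $\widetilde{A} \cong M_8(k)$, so the overall algorithm remains polynomial-time.
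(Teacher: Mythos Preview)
Your proposal is correct and follows essentially the same approach as the paper: compute the local invariants of $A$, use Proposition~\ref{hasse} to build an explicit quaternion division algebra $D'$ with those invariants, tensor to obtain a full matrix algebra over $\mathbb{F}_{2^k}(t)$, split it with the algorithm of \cite{ivanyos2018computing}, and pull back an obvious zero divisor. The only noteworthy difference is that you tensor $A$ with $(D')^{\mathrm{op}}$ to land in $M_8(k)$, whereas the paper tensors $A^{\mathrm{op}}$ with $M_2(D_0)$ and works in $M_{16}(k)$; your choice is a mild efficiency gain but the underlying idea is identical.
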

\begin{proof}
We compute the local indices of $A$ using the algorithm \cite[Proposition 6.5.3.]{ivanyos1996algorithms} and then use Proposition \ref{hasse} to compute a division quaternion algebra $D_0$ with those exact invariants. Since we have constructed a structure constant representation of $D_0$, we can construct a structure constant representation of $M_2(D_0)$ by considering the basis where the matrix has one nonzero entry and that runs through the basis of $D_0$. Then as stated previously, one can construct an explicit isomorphism between $A$ and $M_2(D_0)$ from an explicit isomorphism between $A^{op}\otimes M_2(D_0)$ and $M_{16}(\mathbb{F}_{2^k}(t))$ in polynomial time using the main algorithm from \cite[Section 4]{ivanyos2018computing}. Finally, the preimage of the matrix 
$\begin{pmatrix}
1&0 \\
0&0
\end{pmatrix} $
is a zero divisor.
\end{proof}

\section{Implementation}\label{sec:impl}
    In this section, we give details about our implementation\footnote{\url{https://github.com/Char2QuadForms/Char2QuadForms}} of the algorithm in the Magma language \cite{magma}. We then provide details on the practical efficiency of our implementation and discuss the computational bottlenecks.
\subsection{Implementation details}\label{impldetails}
    The core of our code is a practical implementation of algorithm \ref{algofinding}. Our first step is to take as an input a quaternary quadratic form as a degree $4$ square matrix with coefficients in $\mathbb{F}_{2^k}(t)$. If $Q$ is the input quadratic form, we apply successively the reductions from lemma \ref{diagalak}, corollary \ref{diagalakcor} and lemma \ref{minimalform} to obtain coefficients $a_1,a_2,a_3,a_4 \in \mathbb{F}_{2^k}(t)$ which follow the hypotheses of theorem \ref{finding}, and such that the quadratic form $a_1(x_1^2 + x_1 x_2 + a_2 x_2^2) + a_3(x_3^2 + x_3 x_4 + a_4 x_4^2)$ is similar to $Q$. In addition, we make coefficients $a_1$ and $a_3$ square-free, as it simplifies computation and does not affect the place of the zeros.

    Our implementation of algorithm \ref{algofinding} follows the structure of the pseudo-code representation. See appendix \ref{appendix:algo} for more details on the subroutines for each case. Once all the conditions for a common value have been established, we randomly generate polynomials that satisfy said conditions until we find a prime polynomial. Because of the bound given in lemma \ref{number}, finding one such a polynomial can be done in probabilistic polynomial time. Once we find a prime polynomial which satisfies every condition, we move to the last step of the implementation.
    
    We independently solve equations $a_1(x_1^2+x_1 x_2+ a_2 x_2^2) = ch$ and $a_3(x_1^2+x_1 x_2+ a_3 x_2^2) = ch$. Both these equations directly reduce to a Hilbert equation, and then by lemma \ref{hilberteq} the problem reduces to finding an explicit isomorphism between a given quaternion algebra and the degree $2$ matrix algebra. In practice, we solve the equation $a_1(x_1^2+x_1 x_2 + a_2 x_2^2) = c$ by computing an explicit isomorphism between  $A = \left[\frac{a_2,a_1/c}{\mathbb{F}_{2^k}(t)}\right)$ and $M_2\left(\mathbb{F}_{2^k}(t)\right)$. It follows from \cite[equation 6.4.5]{voight2018quaternion} that in $A$, $nrd(x + yj + zk) = x^2 + \frac{a_1}{c}(y^2 + yz + a_2 z^2)$. We therefore find a singular matrix which pulls back to a quaternion of the form $x + yj + zk$ with $x \neq 0$, and then we set $x_1 = \frac{y}{x}$ and $x_2 = \frac{z}{x}$.
    
    We use the main algorithm from \cite{ivanyos2018computing} to compute the explicit isomorphism between $A$ and $M_2\left(\mathbb{F}_{2^k}(t)\right)$. Since, to the best of our knowledge, it has not been implemented yet, we provide an implementation in Magma which may be of independent interest.

\subsection{Computational data}
    
    In table \ref{table:data} we show the running time for some executions of algorithm \ref{algofinding}. This running time does not include solving the resulting ternary forms, which we will discuss separately. The tests were executed on the online Magma calculator\footnote{\url{http://magma.maths.usyd.edu.au/magma/}} with randomly generated polynomials. The degree of the input polynomials were not randomly chosen, but they were affected by the steps of minimization of the coefficients. We give the degrees of the coefficients after the minimisation steps. By the degree of a rational function we mean the maximal of the degrees of its numerator and of its denominator. The column $\deg h$ refers to the degree of the value represented by both binary forms, that is the output of algorithm \ref{algofinding}. The column $q$ refers to the cardinal of the finite field underlying our rational function field. The running times are given in seconds. We note that all running times here are given for input corresponding to an isotropic quadratic form. In general, it is faster for the algorithm to recognize an anisotropic form than to split an isotropic one.
    
    \begin{table}
    \centering
    \begin{tabular}{ccccccc}
    $\deg a_1$ & $\deg a_2$ & $\deg a_3$ & $\deg a_4$ & $\deg h$ & $q$ & Running time \\
    $83$ & $71$ & $7$ & $93$ & $384$ & $2$ & $0.570$ \\
    $204$ & $211$ & $1048$ & $211$ & $604$ & $2$ & $4.680$ \\
    $21$ & $25$ & $121$ & $25$ & $75$ & $2^{10}$ & $6.090$ \\
    $15$ & $21$ & $102$ & $21$ & $64$ & $2^{20}$ & $29.520$
    \end{tabular}
    \caption{Running times of algorithm \ref{algofinding}}
    \label{table:data}
    \end{table}

    The last part of our implementation is the main algorithm from \cite{ivanyos2018computing} (see the discussion in subsection \ref{impldetails}). A subroutine for this algorithm is the computation of a maximal order in a quaternion algebra. Since this subroutine was not implemented in Magma for algebras over fields of characteristic 2, we gave our own implementation using the polynomial time algorithm given in \cite[subsection 3.2]{ivanyos2018computing}. Our implementation of this algorithm runs significantly slower than the Magma built-in function for maximal order computation in odd characteristic. As a result, we do not draw conclusions regarding the running time for this part of the implementation.
    
    However, the implementation is still practical for small input. In table \ref{table:ternsolver} we give running time for our function solving equations of the form $x^2+xy+ay^2 = h$. Every line refers to a computation done over $\mathbb{F}_2(t)$.

    \begin{table}
    \centering
        \begin{tabular}{ccc}
            $\deg a$ & $\deg h$ & Running time \\
            $1$ & $4$ & $2.390$ \\
            $1$ & $8$ & $20.360$ \\
            $5$ & $14$ & $311.460$ \\
        \end{tabular}
        \caption{Running time for solving $x^2+xy+ay^2=h$}
        \label{table:ternsolver}
    \end{table}
\bibliographystyle{ACM-Reference-Format}
\bibliography{bib}
\newpage
\appendix
\section{Algorithm subroutines}\label{appendix:algo}

\begin{algorithm}
	\KwIn{$a_1,a_3,f \in \mathbb{F}_{2^k}[t], a_2,a_4 \in \mathbb{F}_{2^k}(t)$ such that $a_1$ and $a_3$ are nonzero coprime polynomials, each pole of $a_2$ and $a_4$ has an odd multiplicity, and $f$ is an irreducible polynomial which is a pole of $a_2$ or $a_4$.}
	\KwOut{$h_f \in \mathbb{F}_{2^k}[t],n \in \mathbb{N}$ such that for all $h \in \mathbb{F}_{2^k}[t]$, if $h = h_f \mod f^n$, then $h$ is represented by both binary forms $a_1(x^2+xy+a_2y^2)$ and $a_3(x^2+xy+a_4y^2)$ over $\mathbb{F}_{2^k}(t)_{(f)}$. Outputs $\bot$ instead if such a tuple does not exist.}
	$N \leftarrow 2\max(-\nu_f(a_2),-\nu_f(a_4))+1$\;
	$K_2 \leftarrow \mathbb{F}_{2^k}(t)/(t^2+t+a_2)$\;
	$K_4 \leftarrow \mathbb{F}_{2^k}(t)/(t^2+t+a_4)$\;
	$K_6 \leftarrow \mathbb{F}_{2^k}(t)/(t^2+t+a_2+a_4)$\;
	\If{$(\mathrm{not}\ K_6 = \mathbb{F}_{2^k}(t))\ \mathrm{or}\ \frac{a_1}{a_3} \in N_{K_2/\mathbb{F}_{2^k}(t)}(K_2^\times)$}{
		\Repeat{$\nu_f(c) \le 1\ \mathrm{and}\ c \in N_{K_2/\mathbb{F}_{2^k}(t)}(K_2^\times) \cap N_{K_4/\mathbb{F}_{2^k}(t)}(K_4^\times)$}{
			$c \xleftarrow{\mathrm{Random}} \mathbb{F}_{2^k}[t]/(f^N)$\;
		}
		\Return{c,N}
	}
	\Else{
		\Return{$\mathrm{\bot}$}
	}
	\caption{CommonValuePole}
\end{algorithm}

\begin{algorithm}
	\KwIn{$a_1,a_3,f \in \mathbb{F}_{2^k}[t], a_2,a_4 \in \mathbb{F}_{2^k}(t)$ such that $a_1$ and $a_2$ are nonzero coprime polynomials, each pole of $a_2$ and $a_4$ has an odd multiplicity, and $f$ is an irreducible polynomial which is not a pole of $a_2$ or $a_4$ but such that $\nu_f(a_1a_3)$ is odd.}
	\KwOut{$\nu \in \{0,1\}$ such that a polynomial $h \in \mathbb{F}_{2^k}[t]$ is represented by both binary quadratic forms $a_1(x^2 + x y + a_2 y^2)$ and $a_3(x^2 + xy + a_4 y^2)$ over $\mathbb{F}_{2^k}(t)_{(f)}$ if $\nu_f(h) = \nu \mod 2$. Outputs $\bot$ if there is no such $\nu$.}
	\If{$[a_1,f) = [a_3,f) = 1$}{
		\Return{$\bot$}
	}
	\ElseIf{$[a_1,f) = \nu_f(a_1) \mod 2$}{
		\Return{$1$}
	}
	\Else{
		\Return{$0$}
	}
\caption{CommonValueOdd} 
\end{algorithm}

\begin{algorithm}[H]
	\KwIn{$a_1,a_3 \in \mathbb{F}_{2^k}[t], a_2,a_4 \in \mathbb{F}_{2^k}(t)$ such that $a_1$ and $a_3$ are nonzero coprime polynomials, and each pole of $a_2$ and $a_4$ has an odd multiplicity.}
	\KwOut{$h_\infty \in \mathbb{F}_{2^k}[t],N_\infty \in \mathbb{N}$ such that a polynomial $h \in \mathbb{F}_{2^k}[t]$ is represented by both binary forms $a_1(x^2 + x y + a_2 y^2)$ and $a_3(x^2 + xy + a_4 y^2)$ over the completion of $\mathbb{F}_{2^k}(t)$ at the infinite place if $\deg(h) = \nu_t(h_\infty) \mod 2$ and $\exists n \in \mathbb{Z} \mid x^{2n+\nu_\infty} i(cg) = h_\infty \mod N_\infty$, with $\nu_\infty = \max{(\deg{a_1},\deg{a3})}$.}
$i \leftarrow \mathbb{F}_{2^k}$-automorphism of $\mathbb{F}_{2^k}(t)$ sending $t$ to $1/t$\;
	$\nu_\infty \leftarrow -\max_{i \in \{1,3\}}\deg(a_i)$\;
	$ia_1 \leftarrow i(x^{\nu_\infty} a_1), ia_3 \leftarrow i(x^{\nu_\infty} a_3)$\;
	$ia_2 \leftarrow i(a_2), ia_4 \leftarrow i(a_4)$\;
	\tcc{By minimalise we mean to apply the algorithm described in the proof of lemma \ref{minimalform}}
	minimalise(ia2)\;
	minimalise(ia4)\;
	\If{$t \in \mathrm{Poles}(ia_2) \cup \mathrm{Poles}(ia_4)$}{
		$\mathrm{Res} \leftarrow \mathrm{CommonValuePole}(ia_1,ia_2,ia_3,ia_4,t)$\;
		\If{$\mathrm{Res} = \bot$}{
			\Return{$\bot$}
		}
		\Else{
			$(h_\infty,N_\infty) \leftarrow Res$\;
		}
	}
	\ElseIf{$\nu_t(ia_1 ia_3) = 1 \mod 2$}{
		$\mathrm{Res} \leftarrow \mathrm{CommonValueOdds}(a_1,a_2,a_3,a_4,f)$\;
		\If{$\mathrm{Res} = \bot$}{
			\Return{$\bot$}
		}
		\Else{
			$h_\infty \leftarrow t^{\mathrm{Res}}$\;
			$N_\infty \leftarrow 0$\;
		}
	}
	\Else{
		$h_\infty \leftarrow 1$\;
		$N_\infty \leftarrow 0$\;
	}
	\Return{$(h_\infty,N_\infty)$}

	\caption{CommonValueInf}
\end{algorithm}




\end{document}